\theoremstyle{plain}
\newtheorem{theorem}{Theorem}[section]
\newtheorem{corollary}[theorem]{Corollary}
\newtheorem{lemma}[theorem]{Lemma}
\newtheorem{proposition}[theorem]{Proposition}
\theoremstyle{definition}
\newtheorem{definition}[theorem]{Definition}
\newtheorem{problem}[theorem]{Problem}
\newtheorem{example}[theorem]{Example}
\newtheorem{remark}[theorem]{Remark}
\def\ker#1{\mathrm{ker}(#1)}
\def\Fix#1{\mathrm{Fix}(#1)}
\def\Soc#1{\mathrm{Soc}(#1)}
\def\aut#1{\mathrm{Aut}(#1)}
\def\Pol{\mathrm{Pol}}
\def\comm#1#2{{\llbracket #1, #2 \rrbracket }}
\def\setof#1#2{\{#1\, : \,#2\}}
\def\aut{\mathrm{Aut}}
\def\cg#1{\equiv_\alpha}
\newcommand*\xbar[1]{%
   \hbox{%
     \vbox{%
       \hrule height 0.5pt 
       \kern0.5ex
       \hbox{%
         \kern-0.1em
         \ensuremath{#1}%
         \kern-0.1em
       }%
     }%
   }%
} 
\title{Central nilpotency of skew braces}
\author{Marco Bonatto}
\address[Bonatto]{Department of Mathematics and Computer Science, University of Ferrara, Via Macchiavelli 30, 44121 Ferrara, Italy}
\email{marco.bonatto.87@gmail.com}
\author{P\v remysl Jedli\v cka}
\address[Jedli\v cka]{Department of Mathematics, Faculty of Engineering, Czech University of Life Sciences, Kam\'yck\'a 129, 16521 Praha 6, Czech Republic}
\email{jedlickap@tf.czu.cz}
\begin{document}



%

\maketitle

%
%
%

\section*{Abstract}
Skew braces are algebraic structures related to the solutions of the set-theoretic quantum Yang-Baxter equation. We develop the central nilpotency theory for such algebraic structures in the sense of Freese-McKenzie \cite{comm} and we compare the universal algebraic notion of central nilpotency with the notion of right and left $*$-nilpotency developed in \cite{NilpotentType}.


\section*{Introduction}

A discrete version of the braid equation was introduced in \cite{drinfeld} as the set-theoretical Yang--Baxter equation (YBE). A pair $(X,r)$ where $X$ is a set and $r:X\times X\to X\times X$ is a map, is a \emph{set-theoretical solution to the Yang--Baxter equation} if
\begin{equation}\label{YBE}
(id_X \times r)(r\times id_X)(id_X \times r)=(r\times id_X)(id_X \times r)(r\times id_X)\tag{YBE}
\end{equation}
holds.
A solution $(X,r)$ is {\it non-degenerate} if the map $r$ is defined as
\begin{equation}\label{non-deg}
r:    X\times X\longrightarrow X\times X,\quad (x,y)\mapsto (\sigma_x(y),\tau_y(x)),
\end{equation}
where $\sigma_x,\tau_x$ are permutations of $X$ for every $x\in X$. The family of non-degenerate solutions have been studied by several authors \cite{EGS,MR1637256, MR1769723,MR1809284}. 

Solutions of the \eqref{YBE} can be encoded using certain algebraic structures. Concerning \emph{non-degenerate involutive solutions} (i.e. solutions satisfying $r^2=id_{X\times X}$), Rump introduced the ring-like binary algebraic structures called {\it (left) braces} in \cite{Rump1}. Non-involutive solution are captured by \emph{skew (left) braces} and they were defined by Guarnieri and Vendramin in \cite{Guarneri}.

Finding non-degenerate solutions to \eqref{YBE} is a task closely related to the classification problem for skew braces. Indeed, on one hand to any non-degenerate solution is associated a permutation group with a canonical structure of skew brace. On the other hand all the non-degenerate solutions with a given associated skew brace can be described  \cite{MR3835326}. Thus, the classification of skew braces is the first step to the study of non-degenerate solutions to \eqref{YBE}.

Some group-like definitions of nilpotency for left braces appeared already in~ \cite{Rump1}. This definitions showed up to be practical since they are tied with some algebraic properties of the underlying solution of the Yang-Baxter equation~\cite{Engel}.
These notions have been naturally generalized for
skew braces in \cite{NilpotentType} by using a term operation~$*$ that plays the role of the commutator operation for groups.

In universal algebra, we have a well-established notion of nilpotency and commutators~\cite{comm} and it is thus interesting to study the specialization of
these notions to the theory of skew braces. The notion of the center and of the (central) nilpotency translate nicely and they lead to yet another definition of
nilpotency for skew braces. On the other hand, the operation~$*$ itself is not
sufficient to define the commutator in skew braces.

The article is organized as follows: in Section~1 we define skew braces and we recall some basic properties as well as the notions of nilpotency defined elsewhere.
In Section~2 we define the center of a skew brace and the lower and the upper central series tied to the center. We then study properties of these series and
of the central nilpotency defined through the series.
In Section~3 we recall the abstract definitions of the center and of the central nilpotency and we prove that they agree with the notions defined in Section~2.

\section{Basic facts about skew braces}

\subsection*{Skew braces}
A {\it skew brace} is a set with two binary operations $(A,+,\circ)$ where both $(A,+)$ and $(A,\circ)$ are groups and 
\begin{equation}\label{axiom}
x\circ (y+ z)=(x\circ y)- x+ (x\circ z)
\end{equation}
holds for every $x,y,z\in A$. The mapping defined as
\begin{displaymath}
\lambda: (A,\circ)\longrightarrow \aut(A,+), \quad a\mapsto \lambda_a
\end{displaymath}
where $\lambda_x(y)=-x+ (x\circ y)$, is a group homomorphism between $(A,\circ)$ and $\aut(A,+)$. Therefore equation \eqref{axiom} can be written as 
\begin{equation}\label{axiom2}
x\circ (y+z)=(x\circ y)+\lambda_x(z).
\end{equation}

It can be proved that both groups share the same neutral element, we shall denote it by~$0$. We shall denote by $-x$ the inverse element of $a$ with respect to $+$ and by $\bar x$ the inverse the inverse element of $x$ with respect to $\circ$. We say that the brace $(A,+,\circ)$ is of {\it $\phi$-type} if the group $(A,+)$ has the property $\phi$ (e.g. we say that $(A,+,\circ)$ is of nilpotent-type if $(A,+)$ is nilpotent). Braces, as defined by Rump, are exactly skew braces of abelian type (we stick to such terminology in the rest of the paper).

Skew braces were introduced in \cite{Guarneri} as a tool for constructing set-theoretic solutions of Yang-Baxter equation (Rump introduced skew braces of Abelian type in \cite{Rump1}).

\begin{lemma}\label{condition on lambda} \cite{SmoktVend}
	Let $(A,+)$ be a group, $\lambda:A\longrightarrow \aut(A,+)$ and let $x\circ y=x+\lambda_x(y)$ for $x,y\in A$. The following are equivalent:
\begin{itemize}
\item[(i)] $(A,+, \circ)$ is a skew brace.
\item[(ii)] $\lambda_{x\circ y}=\lambda_{x+\lambda_x(y)}=\lambda_x \lambda_y$ for every $x,y\in A$.
\end{itemize}	
	
\end{lemma}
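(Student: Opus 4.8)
The plan is to prove the equivalence of (i) and (ii) by working directly with the defining axiom~\eqref{axiom2} and the definition $\lambda_x(y)=-x+(x\circ y)$. The strategy is to isolate what the brace axiom says once we substitute the formula $x\circ y = x+\lambda_x(y)$, and to recognize that the axiom is precisely an associativity-type condition that translates into the multiplicativity of the map $\lambda$.

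\medskip

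For the direction (i)$\Rightarrow$(ii), I would assume $(A,+,\circ)$ is a skew brace and first observe that $x\circ y = x+\lambda_x(y)$ holds automatically, since $\lambda_x(y)=-x+(x\circ y)$ gives $x+\lambda_x(y)=x+(-x+(x\circ y))=x\circ y$. The equality $\lambda_{x\circ y}=\lambda_{x+\lambda_x(y)}$ is then immediate from this identity. The substantive part is $\lambda_{x\circ y}=\lambda_x\lambda_y$. First I would recall that $\lambda$ is a group homomorphism from $(A,\circ)$ to $\aut(A,+)$; this is stated in the excerpt as a consequence of the axiom, so applying it to the product $x\circ y$ gives exactly $\lambda_{x\circ y}=\lambda_x\circ\lambda_y=\lambda_x\lambda_y$. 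If one prefers a self-contained argument not invoking the homomorphism property as a black box, one computes $\lambda_{x\circ y}(z)$ and $\lambda_x(\lambda_y(z))$ separately using \eqref{axiom2} and the associativity of $\circ$, and checks they agree for all $z$; the key computation is expanding $(x\circ y)\circ z$ in two ways, namely as $(x\circ y)+\lambda_{x\circ y}(z)$ and as $x\circ(y\circ z)=x\circ(y+\lambda_y(z))=(x\circ y)+\lambda_x(\lambda_y(z))$, and then cancelling $x\circ y$ on the left.

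\medskip

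For the converse (ii)$\Rightarrow$(i), I would assume $\lambda$ satisfies the multiplicativity condition and show that $(A,+,\circ)$ with $x\circ y=x+\lambda_x(y)$ is a skew brace, i.e.\ that $(A,\circ)$ is a group and that axiom~\eqref{axiom} holds. The associativity of $\circ$ is where condition~(ii) does the real work: expanding $(x\circ y)\circ z$ and $x\circ(y\circ z)$ as above, associativity reduces exactly to the identity $\lambda_{x\circ y}=\lambda_x\lambda_y$ applied to $z$, so the two expressions coincide. For the neutral element and inverses I would check that $0$ is a two-sided identity (using $\lambda_0=\mathrm{id}$, which follows since $\lambda_0=\lambda_{0+\lambda_0(0)}=\lambda_0\lambda_0$ forces $\lambda_0$ to be idempotent in a group of automorphisms, hence the identity) and exhibit the $\circ$-inverse of $x$ as $\bar x=\lambda_x^{-1}(-x)$, verifying $x\circ\bar x=0=\bar x\circ x$. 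Finally the brace axiom~\eqref{axiom} is verified by a direct substitution: $x\circ(y+z)=x+\lambda_x(y+z)=x+\lambda_x(y)+\lambda_x(z)$ since each $\lambda_x$ is additive, and this equals $(x\circ y)-x+(x\circ z)=(x+\lambda_x(y))-x+(x+\lambda_x(z))=x+\lambda_x(y)+\lambda_x(z)$, so the two sides match.

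\medskip

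The main obstacle, and the only step requiring genuine care rather than routine expansion, is establishing that $(A,\circ)$ is actually a group in the converse direction; associativity is clean, but one must confirm the existence of a two-sided identity and two-sided inverses using only hypothesis~(ii) together with the group structure of $(A,+)$ and the fact that each $\lambda_x$ is an additive automorphism. Once $\lambda_0=\mathrm{id}$ is pinned down, everything else follows by direct computation, so I would allocate the bulk of the write-up to the associativity equivalence and the inverse verification, treating the additivity-based checks of the brace axiom as immediate.
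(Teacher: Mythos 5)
The paper does not prove this lemma at all: it is quoted from Smoktunowicz--Vendramin \cite{SmoktVend}, so there is no internal proof to compare yours against; your blind proof is essentially the standard one and it is correct. Both directions work as you describe: for (i)$\Rightarrow$(ii), the identity $\lambda_{x\circ y}=\lambda_x\lambda_y$ follows either by invoking that $\lambda$ is a homomorphism $(A,\circ)\to\aut(A,+)$ or, self-containedly, by expanding $(x\circ y)\circ z$ in two ways and cancelling $x\circ y$ on the left; for (ii)$\Rightarrow$(i), associativity, the brace axiom, and $\lambda_0=\mathrm{id}$ are exactly as you say. The one step you flag as delicate but do not actually carry out is the two-sided inverse: with $\bar x=\lambda_x^{-1}(-x)$ you get $x\circ\bar x=x+\lambda_x(\lambda_x^{-1}(-x))=0$ immediately, but $\bar x\circ x=0$ requires knowing $\lambda_{\bar x}=\lambda_x^{-1}$, which is not automatic from the definition of $\bar x$. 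It does follow from (ii): applying $\lambda_{x+\lambda_x(y)}=\lambda_x\lambda_y$ with $y=\bar x$ and using $x+\lambda_x(\bar x)=x-x=0$ gives $\mathrm{id}=\lambda_0=\lambda_x\lambda_{\bar x}$, hence $\lambda_{\bar x}=\lambda_x^{-1}$ and then $\bar x\circ x=\lambda_x^{-1}(-x)+\lambda_x^{-1}(x)=\lambda_x^{-1}(0)=0$. (Alternatively, once associativity, the neutral element and right inverses are established, the standard monoid argument already forces $(A,\circ)$ to be a group.) With that step made explicit, your proof is complete.
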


A {\it left ideal} of a skew brace $(A,+,\circ)$ is a subgroup $H$ of $(A,+)$ such that $\lambda_a(H)\leq H$ for every $a\in H$ \cite{Guarneri}. Note that every characteristic subgroup of $(A,+)$ is a left ideal and every left ideal is a subgroup of $(A,\circ)$. The subgroup 
$$\Fix{A}=\setof{x\in A}{\lambda_y(x)=x, \, \text{ for all }y\in A}$$ 
is clearly a left ideal.

A quotient of a skew brace $(A,+,\circ)$ is a quotient of both the groups $(A,+)$ and $(A,\circ)$. Therefore, any quotient is given by a subgroup $I$ which is normal with respect to both group structures and that satisfies $a+I=a\circ I$ for every $a\in A$. This last condition is controlled by the $\lambda$ mappings. Indeed:
\begin{align}\label{prop of ideals}
x+I=x\circ I \, &\Leftrightarrow \, -x+ (x\circ i)=\lambda_x(y)\in I\, \text{ for every }\, y\in I.
\end{align}
 We call a subgroup satisfying \eqref{prop of ideals} an {\it ideal} of $A$. Hence, an ideal is a normal subgroup $I$ of both $(A,+)$ and $(A,\circ)$ such that $\lambda_x(I)\leq I$ for every $x\in I$ (equivalently $I$ is a left ideal which is also a normal subgroup of $(A,+)$ and $(A,\circ)$). For instance, the socle of $A$ defined as $\Soc{A}=\ker{\lambda}\cap Z(A,+)$ is an ideal.

\subsection*{The $*$ operation}

For a skew brace $A$, following \cite{Rump1, NilpotentType},
 we define the following binary operation \begin{equation}
x*y =-x+ (x\circ y)-y= \lambda_x(y)-y
\end{equation}
for every $x,y\in A$. This operation measures the difference between the two operations $+$ and $\circ$ (through the $\lambda$ mappings). 
%
	Indeed, note that $x*y=0$ if and only if $\lambda_x(y)=y$ if and only if $x+y=x\circ y$.
The elements in the subset
\begin{equation}\label{pseudo socle} 
\Fix{A}\cap \ker{\lambda}=\setof{x\in A}{x*y=y*x=0, \text{ for all } y\in A}
\end{equation}
are the elements such that $x\circ y=x+y$ and $y\circ x=y+x$ for every $y\in A$. Clearly it is a subgroup of $(A,+)$ and of $(A,\circ)$ and it is trivially invariant under $\lambda_x$ for every $x\in A$, so it is a left ideal. 
Moreover, \begin{eqnarray}\label{comm1}
(a+x)*(b+y)&=& \lambda_{a+x}(b+y)-y- b= \lambda_{a\circ x}(b)+\lambda_{a\circ x}(y)-y- b=\notag\\
&=&\lambda_a(b)-b=a*b
\end{eqnarray}
for every $x,y\in \Fix{A}\cap \ker{\lambda}$ and $a,b\in A$. Note that $\Fix{A}\cap \ker{\lambda}$ is a left ideal, and it is also an ideal for all the skew braces in the database of the GAP package \cite{GAP}. We do not know if this is true in general. 


The $*$ operation satisfies the following identity
\begin{equation}\label{eq:star-distr}
x*(y+z)=\lambda_x(y+z)-z- y=\lambda_x(y)-y+ y+ \lambda_x(z) -z-y =x*y+y+x*z-y
\end{equation}
which is analog to the equation satisfied by the commutator in groups.

\begin{remark}\label{star for op braces}
Consider a skew-brace $(B,+,+^{op})$. Then $x*y=[x,y^{-1}]_+$. This example shows that the $*$ operation plays a role similar to the element-wise commutator in groups.
\end{remark}

Let $I,J$ be subsets of $A$. We define 
$$I*J=\langle i*j, \, i\in I,\, j\in J\rangle_+.$$ If $I$ and $J$ are ideals then, by definition, 
$I*J\subseteq I\cap J$. It is well known, that a product of two subgroups is a subgroup if and only if the subgroups permute. It is natural to expect a similar behavior for the $*$-product of two ideals.

\begin{lemma}\label{I*J+J*I is sub} 
Let $(A,+,\circ)$ be a skew brace and $I,J$ be ideals. Then $I*J+J*I$ is a subgroup of~$(A,+)$.
\end{lemma}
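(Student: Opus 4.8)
The plan is to reduce the statement to a normalization fact and then to verify that fact by a one-line application of the distributivity identity~\eqref{eq:star-distr}. Recall the elementary group-theoretic fact that, for subgroups $H,K$ of a group $(A,+)$, the subset $H+K$ is a subgroup if and only if $H+K=K+H$, and that a sufficient condition for this is that $K$ normalize $H$, i.e.\ $k+H-k=H$ for every $k\in K$. I will take $H=I*J$ and $K=J*I$ and prove the stronger statement that the whole subgroup $J$ normalizes $I*J$ in $(A,+)$. Since $J*I\subseteq I\cap J\subseteq J$ (as recalled before the lemma for ideals), this immediately gives that $J*I$ normalizes $I*J$, and hence that $I*J+J*I$ is a subgroup.

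To prove that $J$ normalizes $I*J$ it suffices to check that conjugation by an arbitrary $k\in J$ sends each generator $i*j$ (with $i\in I$, $j\in J$) into $I*J$: conjugation by $k$ is an automorphism of $(A,+)$ and $I*J=\langle i*j\rangle_+$, so mapping the generating set into $I*J$ forces the whole subgroup inside. The key computation rewrites \eqref{eq:star-distr} in the form $y+(x*z)-y=-(x*y)+x*(y+z)$; specializing $x=i$, $y=k$, $z=j$ yields
\[
k+(i*j)-k=-(i*k)+i*(k+j).
\]
Both terms on the right are generators of $I*J$: the left argument stays $i\in I$, while $k\in J$ and $k+j\in J$. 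Hence $k+(i*j)-k\in I*J$, as needed; applying this also to $-k\in J$ upgrades the inclusion to the equality $k+(I*J)-k=I*J$ required for normalization.

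The main (and essentially the only) obstacle is recognizing the correct rearrangement of the distributive law, and understanding why the symmetric sum — rather than either summand alone — is the natural object. The naive attempt to show that $I*J$ itself is normal in $(A,+)$ fails: conjugating $i*j$ by an element $a$ lying outside $J$ produces a term like $i*a$ whose second argument leaves $J$, and nothing then forces it back into $I*J$. This is precisely why one works with $J$-conjugation, which keeps the second arguments inside $J$. One must also track throughout that $(A,+)$ need not be abelian, so the conjugation $x\mapsto k+x-k$ cannot be simplified away; the virtue of the identity above is that it packages this conjugation directly into $*$-terms, so that the ideal hypotheses on $I$ and $J$ do the rest.
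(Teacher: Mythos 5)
Your proof is correct, and it is a cleaner packaging of what the paper does; both arguments ultimately hinge on the same rearrangement of \eqref{eq:star-distr}. The paper proves the exchange inclusion $J*I+I*J\subseteq I*J+J*I$ directly, via the computation $j_1*i_1+i_2*j_2=-(i_2*(j_1*i_1))+i_2*(j_1*i_1+j_2)+j_1*i_1$, which is precisely your identity $y+(x*z)-y=-(x*y)+x*(y+z)$ specialized to $x=i_2$, $y=j_1*i_1$, $z=j_2$ (using $J*I\subseteq J$ to see that $y$ lies in $J$); it then checks closure under negation and concludes $\langle I*J,J*I\rangle_+=I*J+J*I$ by an informal induction on sums of generators. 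You instead prove the stronger, more conceptual statement that every element of $J$ normalizes $I*J$ under $+$-conjugation, and then invoke the standard group-theoretic criterion that $H+K$ is a subgroup whenever every element of the subgroup $K$ normalizes the subgroup $H$; the inclusion $J*I\subseteq I\cap J\subseteq J$ finishes the argument. What your route buys: it isolates a reusable and strictly stronger intermediate fact ($I*J$ is normalized by all of $J$, not merely by $J*I$), and it replaces the paper's closing induction with a citation of elementary group theory. What the paper's route buys: it is self-contained in a single displayed computation and needs no normalizer criterion. In effect, the paper's computation is the special case $k=j_1*i_1$ of your normalization claim.
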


\begin{proof} We need to prove $J*I+I*J\subseteq I*J+J*I$.
According to \eqref{eq:star-distr}, we have
\begin{multline*}j_1*i_1+i_2*j_2=
-(i_2*(j_1*i_1))+i_2*(j_1*i_1)+j_1*i_1+i_2*j_2 -j_1*i_1+j_1*i_1=\\
-(i_2*(j_1*i_1))+i_2*(j_1*i_1+j_2)+j_1*i_1\in
I*J+I*J+J*I.
\end{multline*}
We need also $-(I*J+J*I)\subseteq I*J+J*I$. But clearly
\[-(i_1*j_2+j_2*i_1)=-(j_2*i_1)-(i_1*j_2)\in J*I+I*J\subseteq I*J+J*I.\]
Then easily, by an induction, we get $\langle I*J, J*I\rangle_+=I*J+J*I$.
\end{proof}

	\begin{proposition}\label{left ideals from I and J}
		Let $(A,+,\circ)$ be a skew brace and $I,J$ be ideals. Then $I*J$, $[I,J]_+$ and $I*J+J*I$ are left ideals of $A$.
	\end{proposition}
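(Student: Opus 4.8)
The plan is to handle the three subgroups separately, reducing every assertion to the single claim that the object in question is invariant under $\lambda_a$ for all $a\in A$. Indeed, $I*J$ and $[I,J]_+$ are subgroups of $(A,+)$ straight from their definitions (each is the subgroup \emph{generated} by the relevant elements), while $I*J+J*I$ is a subgroup by Lemma~\ref{I*J+J*I is sub}. Hence in each case the remaining content is $\lambda$-invariance, and since $\lambda_a$ is an additive automorphism it suffices to check that $\lambda_a$ carries a chosen generating set back into the subgroup.

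The heart of the matter, and the step I expect to be the main obstacle, is the computation of $\lambda_a(i*j)$ for $a\in A$, $i\in I$, $j\in J$. Writing $i*j=\lambda_i(j)-j$ and using additivity of $\lambda_a$ gives $\lambda_a(i*j)=\lambda_a\lambda_i(j)-\lambda_a(j)$. I would then invoke the multiplicativity of $\lambda$ from Lemma~\ref{condition on lambda} in the conjugated form $\lambda_a\lambda_i=\lambda_{a\circ i\circ\bar a}\,\lambda_a$ (which follows since $(a\circ i\circ\bar a)\circ a=a\circ i$), where $k:=a\circ i\circ\bar a$ lies in $I$ because $I$ is normal in $(A,\circ)$. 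This recasts the expression as
\[
\lambda_a(i*j)=\lambda_k(\lambda_a(j))-\lambda_a(j)=k*\lambda_a(j).
\]
Since $k\in I$ and $\lambda_a(j)\in J$ (as $J$ is a left ideal), the right-hand side is a generator of $I*J$, so $\lambda_a(I*J)\subseteq I*J$ and $I*J$ is a left ideal. Exchanging the roles of $I$ and $J$ yields that $J*I$ is a left ideal as well.

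The other two cases are then immediate. For $[I,J]_+$, additivity of $\lambda_a$ gives $\lambda_a([i,j]_+)=[\lambda_a(i),\lambda_a(j)]_+$, and $\lambda_a(i)\in I$, $\lambda_a(j)\in J$ because $I$ and $J$ are left ideals, so this is again a generator of $[I,J]_+$. Finally, for $I*J+J*I$ I would simply combine the first two cases: it is a subgroup by Lemma~\ref{I*J+J*I is sub}, and $\lambda_a(I*J+J*I)=\lambda_a(I*J)+\lambda_a(J*I)\subseteq I*J+J*I$ because $I*J$ and $J*I$ are each $\lambda_a$-invariant. The only points that genuinely require care are the conjugation identity for $\lambda$ and the observation that normality of $I$ (respectively $J$) in the circle group is exactly what keeps $k$ inside $I$ (respectively $J$); the rest is formal.
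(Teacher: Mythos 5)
Your proof is correct and follows essentially the same route as the paper's: the central computation $\lambda_a(i*j)=\lambda_{a\circ i\circ\bar a}\lambda_a(j)-\lambda_a(j)=(a\circ i\circ\bar a)*\lambda_a(j)$, the use of Lemma~\ref{I*J+J*I is sub} for the subgroup property of $I*J+J*I$, and the direct check for $[I,J]_+$ are exactly the paper's steps. If anything, you are slightly more explicit than the paper in noting that $J*I$ is a left ideal by symmetry before summing, but this is the same argument.
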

	\begin{proof}
		Let $i\in I$, $j\in J$, $x\in A$. Then 
		\begin{eqnarray*}
			\lambda_x(i*j)&=&\lambda_x(\lambda_i(j)-j)=\lambda_x \lambda_i(j)-\lambda_x(j)=\\
			&=&\lambda_{x\circ i \circ \bar{x}}\lambda_x(j)-\lambda_x(j)=\\
			&=&(x\circ i \circ \bar{x})*\lambda_x(j)\in I*J.
			%
		\end{eqnarray*}
		Then $I*J$ is invariant under $\lambda_x$ for every $x\in A$ and then it is a left ideal. According to Lemma \ref{I*J+J*I is sub} $I*J+J*I$ is a subgroup. The sum of left ideals is again a left ideal, then $I*J+J*I$ is a left ideal as well.\\
		Clearly $[I,J]_+$ is a subgroup of $(A,+)$ and $\lambda_x([i,j]_+)=[\lambda_c(i),\lambda_c(j)]_+$ for every $x\in A$. Since $I$ and $J$ are left ideals, then $\lambda_x(i)\in I$ and $\lambda_x(j)\in J$. Therefore $[I,J]_+$ is a left ideal.
	\end{proof}

The $*$ operation provides a characterization of (left) ideals of a skew braces.

\begin{proposition}\label{ideals by bracket}\cite[Lemmas 1.8, 1.9]{NilpotentType}
	Let $(A,+, \circ)$ be a skew brace. A normal subgroup $I$ of $(A,+)$ is an ideal if and only if $I*A+A*I\leq I$.
\end{proposition}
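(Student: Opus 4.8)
The plan is to break the single inclusion $I*A+A*I\leq I$ into the two inclusions $A*I\leq I$ and $I*A\leq I$ and to match each with one of the conditions defining an ideal. Recall that, $I$ being already a normal subgroup of $(A,+)$, to be an ideal it must in addition be $\lambda$-invariant (equivalently $a+I=a\circ I$ for all $a$) and normal in $(A,\circ)$. I will show that $A*I\leq I$ captures the first of these properties and $I*A\leq I$ the second, after which the equivalence is immediate.

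The inclusion $A*I\leq I$ is the easy half. Since $a*i=\lambda_a(i)-i$ and $I$ is a subgroup of $(A,+)$ containing $i$, we have $a*i\in I$ if and only if $\lambda_a(i)\in I$; ranging over all $a\in A$ and $i\in I$ this says exactly that $\lambda_a(I)\leq I$ for every $a$, i.e.\ that $I$ is a left ideal. So $A*I\leq I$ is equivalent to $\lambda$-invariance.

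The inclusion $I*A\leq I$ is the heart of the argument, and I expect it to be the main obstacle. Here I assume $I$ is already a $\lambda$-invariant normal subgroup of $(A,+)$ and show that $I*A\leq I$ is equivalent to normality of $I$ in $(A,\circ)$. The key computation is to expand the $\circ$-conjugate of $i\in I$ by $a\in A$ using $x\circ y=x+\lambda_x(y)$, $\lambda_{x\circ y}=\lambda_x\lambda_y$ and $\lambda_a(\bar a)=-a$ (together with $\lambda_i(\bar a)=(i*\bar a)+\bar a$), obtaining the identity
\[
a\circ i\circ\bar a=a+\lambda_a(i)+\lambda_a(i*\bar a)-a.
\]
Because $\lambda_a(i)\in I$ and $I$ is normal in $(A,+)$, the telescoping $a+u+v-a=(a+u-a)+(a+v-a)$ lets me absorb the $\lambda_a(i)$ term into $I$, so that $a\circ i\circ\bar a\in I$ if and only if $\lambda_a(i*\bar a)\in I$, i.e.\ (using $\lambda$-invariance once more, as $\lambda_a$ is an automorphism stabilizing $I$) if and only if $i*\bar a\in I$. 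As $\bar a$ runs over all of $A$ together with $a$, this shows $I*A\leq I$ precisely when $I$ is normal in $(A,\circ)$. The delicate points here are manipulating $\bar a$ correctly and keeping track of the order of the non-commutative additions; the clean reduction modulo $I$ relies essentially on having both $+$-normality and $\lambda$-invariance available before treating $\circ$-normality.

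Putting the two halves together, $I$ is an ideal if and only if it is $\lambda$-invariant and normal in $(A,\circ)$, if and only if $A*I\leq I$ and $I*A\leq I$, if and only if $I*A+A*I\leq I$; the last equivalence holds because $I$ is a subgroup, so the generated subgroup $I*A+A*I$ lies in $I$ exactly when each of $I*A$ and $A*I$ does.
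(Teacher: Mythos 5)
Your proof is correct. Note that the paper itself contains no proof of this proposition --- it is quoted from \cite[Lemmas 1.8, 1.9]{NilpotentType} --- so there is no in-paper argument to compare against; your proof is a valid self-contained verification and follows essentially the same route as the cited lemmas: $A*I\leq I$ is equivalent to $\lambda$-invariance because $a*i=\lambda_a(i)-i$ and $I$ is a subgroup of $(A,+)$, while the identity $a\circ i\circ\bar a=a+\lambda_a(i)+\lambda_a(i*\bar a)-a$ (which I checked: $a\circ i\circ\bar a=a+\lambda_a(i)+\lambda_a\lambda_i(\bar a)$, $\lambda_i(\bar a)=(i*\bar a)+\bar a$, $\lambda_a(\bar a)=-a$) reduces $\circ$-normality to $I*A\leq I$, using that conjugation by $a$ in $(A,+)$ and $\lambda_a$ both stabilize $I$. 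The only step you leave implicit is that a $\lambda$-invariant subgroup of $(A,+)$ is automatically a subgroup of $(A,\circ)$, so that ``normal in $(A,\circ)$'' makes sense; this is exactly the fact stated in Section~1 of the paper (every left ideal is a subgroup of $(A,\circ)$, via $x\circ y=x+\lambda_x(y)$ and $\bar x=\lambda_{\bar x}(-x)$), so it is a one-line addition rather than a gap.
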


%

In \cite{NilpotentType} two series of left ideals have been defined, for a skew brace $A$,  as
\begin{eqnarray}
A^1&=&A,\quad A^{n+1}= A^{n}*A,\\
A^{(1)}&=&A,\quad A^{(n+1)}=A^{(n)}*A
\end{eqnarray} 
A brace is said to be {\it right $*$-nilpotent} (resp. {\it left $*$-nilpotent}) if $A^{(n)}=0$ (resp. $A^{n}=0$) for some $n\in \mathbb{N}$. In \cite{Engel} the following series of left ideals for skew braces was defined:
$$A^{[1]}=A,\quad A^{[n+1]} =\left\langle \bigcup_{i=1}^n A^{[i]}*A^{[n+1-i]}\right\rangle_+.$$
It turned out that a skew brace $A$ is left and right $*$-nilpotent if and only if $A^{[n]}=0$ for some $n\in \mathbb{N}$. In such case we say that $A$ is $*$-nilpotent.

\section{Centrally Nilpotent Skew Braces}

We finished the last section with definitions
of some nilpotency series. In this section
we present yet another definition that mimics the behavior of the nilpotency of groups
and we compare this definition with the definitions above.

Let $(A,+,\circ)$ be a skew brace we define the {\it center} of $(A,+,\circ)$ as $\zeta(A)=\Soc{A}\cap \Fix{A}$. The center of $(A,+,\circ)$ is thus the set 
\begin{multline}\label{center2}
\zeta(A)=\setof{x\in A}{x*y=y*x=[x,y]_+=0,\, \text{for every } y\in A}\\
=\setof{x\in A}{x+y=y+x=x\circ y=y\circ x,\, \text{for every } y\in A}.
\end{multline}

Note that every subgroup of $(A,+)$ contained in $\zeta(A)$ is an ideal and that $\zeta(A)\leq Z(A,+)\cap Z(A,\circ)$.

\begin{remark} 
There are examples of skew braces $(A,+,\circ)$ with non-trivial socle and such that $Fix(A)=0$ and vice versa (such examples can be traced down in the database of the GAP package \cite{GAP}).
\end{remark}
\begin{definition}\label{nilpotent}
A skew brace $(A,+,\circ)$ is said to be {\it centrally nilpotent of class~$n$} if there exists a chain of ideals
\begin{equation}
0=I_0\leq I_1\leq \ldots\leq I_n=A,
\end{equation}
such that $I_{j+1}/I_j\leq \zeta(A/I_j)$ for every $0\leq j\leq n-1$.
\end{definition}
There are canonical chains of subgroups that measure the class of central nilpotency:

\begin{definition}
Let $A$ be a skew brace, $n\in \mathbb{N}$ and $I\subseteq A$. We define the {\em upper central series} of~$A$
and the {\em lower central series} of~$I$ as follows:
\begin{eqnarray}
\zeta_0(A)=0, &\quad& \zeta_n(A)=\setof{x\in A}{x*y,\, y*x\, ,[x,y]\in \zeta_{n-1}(A) \text{ for every } y\in A}\label{centers}\\
\Gamma_0(I)=I,&\quad &\Gamma_n(I)=\langle\Gamma_{n-1}(I)*A,\,A*\Gamma_{n-1}(I),\,[\Gamma_{n-1}(I),A]_+\rangle_+.\label{gammas}
\end{eqnarray}
In particular $\zeta_1(A)=\zeta(A)$ as defined in \eqref{center2}.
\end{definition}

\begin{example}
 Let $(A,+,+^{op})$ be a skew brace. Than
 the upper and lower central series of the
 skew brace are the same as the upper and lower central series of the group $(A,+)$
 since $*$ is the ordinary group commutator here,
 according to Remark~\ref{star for op braces}.
\end{example}

\begin{lemma}\label{lem:lower_central_series}
Let $(A,+,\circ )$ be a skew brace. Then $\zeta_n(A)$ is an ideal 
and $\zeta_{n+1}(A)/\zeta_n(A)=\zeta(A/\zeta_n(A))$ for every $n\in \mathbb{N}$.
\end{lemma}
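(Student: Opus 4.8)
The plan is an induction on $n$ whose engine is the observation that both $*$ and the additive commutator $[\,\cdot\,,\,\cdot\,]_+$ are term operations in $+$ and $\circ$, and hence pass to quotients: if $I$ is an ideal with canonical projection $\pi\colon A\to A/I$, then $\pi(x*y)=\pi(x)*\pi(y)$ and $\pi([x,y]_+)=[\pi(x),\pi(y)]_+$ for all $x,y\in A$. The base case $n=0$ is immediate, since $\zeta_0(A)=0$ is an ideal (equivalently, the case $n=1$ is covered by the already-noted fact that $\zeta(A)$ is an ideal).

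For the inductive step, assume $\zeta_{n-1}(A)$ is an ideal, so that $A/\zeta_{n-1}(A)$ is a skew brace; let $\pi$ denote the projection onto it. Comparing the defining condition \eqref{centers} of $\zeta_n(A)$ with the description \eqref{center2} of the center and using the two displayed identities, one sees that $x\in\zeta_n(A)$ precisely when $\pi(x)*\pi(y)=\pi(y)*\pi(x)=[\pi(x),\pi(y)]_+=0$ for every $y\in A$, that is, precisely when $\pi(x)\in\zeta(A/\zeta_{n-1}(A))$. Therefore
\[
\zeta_n(A)=\pi^{-1}\bigl(\zeta(A/\zeta_{n-1}(A))\bigr).
\]
Because the center of any skew brace is an ideal and the preimage of an ideal under a skew brace homomorphism is again an ideal, $\zeta_n(A)$ is an ideal, closing the induction for the first claim.

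The second claim is then formal: for fixed $n$ the ideal $\zeta_n(A)$ yields a quotient $A/\zeta_n(A)$, and the identification above (now with $\pi\colon A\to A/\zeta_n(A)$) reads $\zeta_{n+1}(A)=\pi^{-1}(\zeta(A/\zeta_n(A)))$; the correspondence theorem for $\pi$ gives $\zeta_{n+1}(A)/\zeta_n(A)=\zeta(A/\zeta_n(A))$. I expect the only genuinely delicate point to be the auxiliary fact that the preimage of an ideal is an ideal: one must check normality in both $(A,+)$ and $(A,\circ)$ together with the $\lambda$-invariance condition (or invoke Proposition~\ref{ideals by bracket}), since without it the induction on ``$\zeta_n(A)$ is an ideal'' does not close.
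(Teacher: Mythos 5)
Your proof is correct and is essentially the paper's own argument: the paper likewise proceeds by induction, taking $\zeta_1(A)$ as the base case (via Proposition~\ref{ideals by bracket}) and then identifying $\zeta_{n+1}(A)$ as the preimage of $\zeta(A/\zeta_n(A))$ under the canonical projection, from which both the ideal property and the quotient identity follow. Your write-up simply fills in the details the paper leaves implicit, namely that $*$ and $[\,\cdot\,,\,\cdot\,]_+$ commute with the projection and that preimages of ideals under skew brace homomorphisms are ideals.
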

\begin{proof}
If $n=1$ then $A*\zeta_1(A)+\zeta_1(A)*A=0$ and then $\zeta_1(A)$ is an ideal,
according to Proposition~\ref{ideals by bracket}. Assume by induction that $\zeta_n(A)$ is an ideal. The set $\zeta_{n+1}(A)$ is the preimage of $\zeta(A/\zeta_n(A))$ under the canonical projection onto $A/\zeta_n(A)$ then it is an ideal. 
\end{proof}

The upper central series is thus a series of ideals. On the other hand, we don't know
whether $\setof{\Gamma_n(A)}{n\in\mathbb{N}}$ are ideals or even left ideals.

\begin{lemma}\label{gamma and zeta}
	Let $A$ be a skew brace, $I\subseteq A$ and $n,k\in\mathbb{N}$. Then $\Gamma_n(I)\leq \zeta_k(A)$ if and only if $I\leq \zeta_{n+k}(A)$.
\end{lemma}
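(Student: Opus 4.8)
The plan is to prove the equivalence by induction on $n$, establishing the statement for all subsets $I\subseteq A$ and all $k\in\mathbb{N}$ simultaneously, i.e. for every $n$ the assertion ``$\Gamma_n(I)\leq\zeta_k(A)$ if and only if $I\leq\zeta_{n+k}(A)$, for all $I$ and all $k$.'' The base case $n=0$ is immediate, since $\Gamma_0(I)=I$ by definition and $\zeta_{0+k}(A)=\zeta_k(A)$, so both sides of the equivalence literally coincide.

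For the inductive step I would unwind one layer of each of the two recursions \eqref{centers} and \eqref{gammas} and match them against each other. The crucial preliminary fact is that, by Lemma~\ref{lem:lower_central_series}, each $\zeta_k(A)$ is an ideal and in particular a subgroup of $(A,+)$; this is what legitimizes the reduction from a subgroup containment to a containment of generators. Indeed $\Gamma_{n+1}(I)$ is by definition the subgroup of $(A,+)$ generated by $\Gamma_n(I)*A$, $A*\Gamma_n(I)$ and $[\Gamma_n(I),A]_+$, so $\Gamma_{n+1}(I)\leq\zeta_k(A)$ holds exactly when all three generating sets lie in $\zeta_k(A)$, that is, when $g*a$, $a*g$ and $[g,a]_+$ belong to $\zeta_k(A)$ for every $g\in\Gamma_n(I)$ and every $a\in A$. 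Comparing this with the defining condition for $\zeta_{k+1}(A)$ in \eqref{centers}, the quantified condition is precisely the statement $\Gamma_n(I)\leq\zeta_{k+1}(A)$. Hence the one-step reduction $\Gamma_{n+1}(I)\leq\zeta_k(A)\iff\Gamma_n(I)\leq\zeta_{k+1}(A)$.

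It then remains to invoke the induction hypothesis at level $n$, applied with $k+1$ in place of $k$, which gives $\Gamma_n(I)\leq\zeta_{k+1}(A)\iff I\leq\zeta_{n+(k+1)}(A)=\zeta_{(n+1)+k}(A)$. Chaining the two equivalences yields $\Gamma_{n+1}(I)\leq\zeta_k(A)\iff I\leq\zeta_{(n+1)+k}(A)$, which closes the induction and proves the lemma.

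I do not anticipate a genuine obstacle: the argument is essentially the skew-brace transcription of the classical interplay between the upper and lower central series of a group. The only two points that demand care are, first, the passage from $\Gamma_{n+1}(I)\leq\zeta_k(A)$ to the containment of the generators, which relies on $\zeta_k(A)$ being closed under $+$ (guaranteed by Lemma~\ref{lem:lower_central_series}); and second, the organisational subtlety that the induction must be run uniformly in $k$, so that the hypothesis is available precisely at the shifted index $k+1$ needed in the inductive step.
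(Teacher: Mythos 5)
Your proof is correct and follows essentially the same route as the paper's: induction on $n$ (uniform in $k$), with the key one-step equivalence $\Gamma_{n+1}(I)\leq\zeta_k(A)\iff\Gamma_n(I)\leq\zeta_{k+1}(A)$ read off from definitions \eqref{centers} and \eqref{gammas}. The paper dismisses this step as ``clear from definitions,'' whereas you spell out the generator-matching argument and the reliance on $\zeta_k(A)$ being a subgroup (Lemma~\ref{lem:lower_central_series}), which is a faithful elaboration of the same idea.
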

\begin{proof}
	Let us proceed by induction on $n$. If $n=0$ the statement is trivial. 
	Now we want to prove, for any $n\geq 0$, 
	that $\Gamma_n(I)\leq \zeta_{k+1}(A)$ is equivalent to $\Gamma_{n+1}(I)\leq \zeta_k(A)$.
	But this is clear from definitions 
	\eqref{centers} and \eqref{gammas}.
\end{proof}


%
%
%

\begin{theorem}
Let $(A,+,\circ)$ be a skew brace. The following are equivalent:
\begin{itemize}
\item[(i)] $(A,+,\circ )$ is centrally nilpotent of class $n$.
\item[(ii)] $\zeta_n(A)=A$.
\item[(iii)] $\Gamma_n(A)=0$.
\end{itemize}
\end{theorem}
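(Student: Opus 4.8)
The plan is to prove the cycle of implications $(i)\Rightarrow(ii)\Rightarrow(iii)\Rightarrow(i)$, leaning heavily on the two lemmas already established, namely Lemma~\ref{lem:lower_central_series} (the upper central series consists of ideals with $\zeta_{n+1}(A)/\zeta_n(A)=\zeta(A/\zeta_n(A))$) and Lemma~\ref{gamma and zeta} (the adjunction $\Gamma_n(I)\leq\zeta_k(A)\Leftrightarrow I\leq\zeta_{n+k}(A)$).

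For $(i)\Rightarrow(ii)$ I would take a witnessing chain $0=I_0\leq I_1\leq\cdots\leq I_n=A$ with $I_{j+1}/I_j\leq\zeta(A/I_j)$ and prove by induction on $j$ that $I_j\leq\zeta_j(A)$. The base case $I_0=0=\zeta_0(A)$ is immediate. For the inductive step, the hypothesis $I_{j+1}/I_j\leq\zeta(A/I_j)$ says precisely that for $x\in I_{j+1}$ and every $y\in A$ the elements $x*y$, $y*x$ and $[x,y]_+$ lie in $I_j$; using $I_j\leq\zeta_j(A)$ these all lie in $\zeta_j(A)$, which by \eqref{centers} means $x\in\zeta_{j+1}(A)$. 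Hence $I_{j+1}\leq\zeta_{j+1}(A)$, and at $j=n$ we get $A=I_n\leq\zeta_n(A)\leq A$, so $\zeta_n(A)=A$.

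The implication $(ii)\Leftrightarrow(iii)$ is where Lemma~\ref{gamma and zeta} does the work: instantiating it with $I=A$ and $k=0$, and using $\zeta_0(A)=0$, gives $\Gamma_n(A)\leq\zeta_0(A)=0$ if and only if $A\leq\zeta_n(A)$, i.e.\ $\Gamma_n(A)=0$ iff $\zeta_n(A)=A$. This is essentially a one-line consequence, so the only care needed is to check that the hypothesis $I\subseteq A$ of the lemma is met (it is, with $I=A$). Finally, for $(iii)\Rightarrow(i)$ I would exhibit an explicit witnessing chain. Set $I_j=\zeta_j(A)$; by Lemma~\ref{lem:lower_central_series} each $\zeta_j(A)$ is an ideal, the chain is ascending, and the defining equality $\zeta_{j+1}(A)/\zeta_j(A)=\zeta(A/\zeta_j(A))$ gives exactly $I_{j+1}/I_j\leq\zeta(A/I_j)$; moreover $(ii)$ (already equivalent to $(iii)$) guarantees $I_n=\zeta_n(A)=A$ and $I_0=\zeta_0(A)=0$, so this is a valid chain of length $n$ and $A$ is centrally nilpotent of class $n$.

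The only genuinely delicate point is bookkeeping around the notion of \emph{class}~$n$: Definition~\ref{nilpotent} asks for a chain of length exactly $n$, and one must make sure the induction in $(i)\Rightarrow(ii)$ respects the index so that the class obtained from the chain matches the index $n$ at which $\zeta_n(A)=A$, rather than proving only the weaker statement that some chain of some length terminates. I expect this indexing to be the main (if minor) obstacle, since everything else is a direct unwinding of the two lemmas; in particular no new computation with $*$ is required, because the hard identities have already been absorbed into Lemma~\ref{gamma and zeta}.
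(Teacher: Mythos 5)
Your proof is correct and follows essentially the same route as the paper: the paper also derives (i)$\Leftrightarrow$(ii) from Lemma~\ref{lem:lower_central_series} and (ii)$\Leftrightarrow$(iii) from Lemma~\ref{gamma and zeta} with $I=A$, $k=0$. The only difference is that you spell out the induction ($I_j\leq\zeta_j(A)$ for a witnessing chain) and the explicit chain construction, which the paper leaves implicit in its one-line appeal to the lemmas.
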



\begin{proof}
(i) $\Leftrightarrow$ (ii) is due to Lemma~\ref{lem:lower_central_series}.\newline
(ii) $\Leftrightarrow$ (iii) According to Lemma \ref{gamma and zeta}, $\Gamma_n(A)=0=\zeta_0(A)$ if and only if $A\leq \zeta_n(A)$ 
\end{proof}

\begin{corollary}\label{nilpotent are both left and right}
Let $(A,+,\circ)$ be a centrally nilpotent skew brace. Then $(A,+,\circ)$ is a $*$-nilpotent brace of nilpotent type and $(A,\circ)$ is nilpotent.
\end{corollary}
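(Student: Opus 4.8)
The plan is to deduce all three conclusions directly from the two canonical series of Definition~\ref{gammas}, using the equivalences just established: central nilpotency of class $n$ means precisely $\zeta_n(A)=A$ and $\Gamma_n(A)=0$. The upper central series will yield the nilpotency of the two group reducts, while the lower central series will yield $*$-nilpotency.

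First I would treat the two group structures simultaneously via the upper central series. By Lemma~\ref{lem:lower_central_series} each $\zeta_i(A)$ is an ideal, hence a normal subgroup of both $(A,+)$ and $(A,\circ)$, and $\zeta_{i+1}(A)/\zeta_i(A)=\zeta(A/\zeta_i(A))$. Since for every skew brace one has $\zeta(A)\leq Z(A,+)\cap Z(A,\circ)$ (as observed right after the definition of the center), applying this to the quotient $A/\zeta_i(A)$ gives
$$\zeta_{i+1}(A)/\zeta_i(A)\leq Z(A/\zeta_i(A),+)\cap Z(A/\zeta_i(A),\circ).$$
Thus the chain $0=\zeta_0(A)\leq\cdots\leq\zeta_n(A)=A$ is at once a central series of the group $(A,+)$ and of the group $(A,\circ)$ in the ordinary group-theoretic sense. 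Consequently both $(A,+)$ and $(A,\circ)$ are nilpotent of class at most $n$; in particular $A$ is of nilpotent type and $(A,\circ)$ is nilpotent, which gives two of the three claims.

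It remains to prove $*$-nilpotency, and here I would switch to the lower central series. I claim $\Gamma_n(A)$ dominates both one-sided $*$-series, that is $A^{n+1}\leq\Gamma_n(A)$ and $A^{(n+1)}\leq\Gamma_n(A)$ for all $n$. This is a short induction: $\Gamma_0(A)=A$ settles the base case, and since $\Gamma_n(A)$ is generated by $\Gamma_{n-1}(A)*A$, $A*\Gamma_{n-1}(A)$ and $[\Gamma_{n-1}(A),A]_+$, the inductive hypothesis $A^{(n)},A^{n}\leq\Gamma_{n-1}(A)$ yields $A^{(n+1)},A^{n+1}\leq\Gamma_n(A)$, each $*$-series being obtained from $A^{(n)}$ or $A^n$ by a one-sided $*$-product with $A$ and hence landing in the corresponding generator of $\Gamma_n(A)$. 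Therefore $\Gamma_n(A)=0$ forces $A^{n+1}=A^{(n+1)}=0$, so $A$ is simultaneously left and right $*$-nilpotent; by the characterization recalled at the end of Section~1 this is exactly $*$-nilpotency.

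The argument has no genuinely deep step; the points that need care are bookkeeping. One must verify that the containment $\zeta(A)\leq Z(A,+)\cap Z(A,\circ)$ indeed survives passage to the quotient $A/\zeta_i(A)$ — which it does, being a general fact about skew braces — so that the upper central chain really functions as a central series for each group reduct, and one must keep the index shift between $\Gamma_n$ and the $*$-series straight throughout the induction. The main place to be attentive is thus the $*$-nilpotency step, where the two one-sided series must both be shown to be absorbed by $\Gamma_n(A)$ before invoking $\Gamma_n(A)=0$.
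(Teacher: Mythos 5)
Your proof is correct and takes essentially the same route as the paper's, which is a one-line assertion of exactly your two ingredients: the containments $A^{n},A^{(n)}\leq \Gamma_n(A)$ (up to an index shift, which you handle more carefully as $A^{n+1},A^{(n+1)}\leq\Gamma_n(A)$) and the fact that $\zeta_1(A)\leq Z(A,+)\cap Z(A,\circ)$, propagated through the upper central series to make it a central series of both group reducts. You have simply supplied the bookkeeping details that the paper leaves implicit.
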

\begin{proof}
It follows since $A^{n},A^{(n)}\leq \Gamma_n$ for every $n\in \mathbb{N}$ and the ideal $\zeta_1(A)$ is contained in $Z(A,+)\cap Z(A,\circ)$. 
\end{proof}

\begin{remark}
Let $(A,+)$ be a 
perfect group. The skew brace $(A,+,+)$ is $*$-nilpotent (indeed $x*y=0$ for every $x,y\in A$), but it is not centrally nilpotent since $A=[A,A]_+$.
\end{remark}


\begin{proposition}\label{center}
	Let $(A,+,\circ)$ be a $*$-nilpotent skew brace of nilpotent type and $I$ be a non trivial ideal of $A$. Then $\zeta(A)\cap I\neq 0$. In particular, $\zeta(A)\neq 0$.	
\end{proposition}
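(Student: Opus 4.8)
The plan is to imitate the classical fact that in a nilpotent group every nontrivial normal subgroup meets the centre, using the lower central series $\Gamma_n$ of the skew brace in place of the group-theoretic one. Concretely, I would study the descending chain $I=\Gamma_0(I)\supseteq\Gamma_1(I)\supseteq\cdots$ and argue three things: it stays inside $I$, it eventually vanishes, and its last nonzero term is contained in $\zeta(A)$.

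First I would record some elementary bookkeeping. Since $I$ is an ideal, Proposition~\ref{ideals by bracket} gives $I*A+A*I\leq I$, and normality of $I$ in $(A,+)$ gives $[I,A]_+\leq I$; an induction on $n$ then yields $\Gamma_n(I)\leq I$ for all $n$, because each generator of $\Gamma_n(I)$ is produced from $\Gamma_{n-1}(I)\leq I$ by starring with elements of $A$ on either side or by taking an additive commutator with $A$, all of which send subsets of $I$ into $I$. The defining recursion \eqref{gammas} also shows $\Gamma_{n+1}(B)=\Gamma_1(\Gamma_n(B))$, hence $\Gamma_{a+b}(B)=\Gamma_b(\Gamma_a(B))$, and that $\Gamma_n$ is monotone in its argument: $B\leq C$ implies $\Gamma_n(B)\leq\Gamma_n(C)$.

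The core of the argument is that the chain terminates, i.e. that $A$ is centrally nilpotent: once $\Gamma_N(A)=0$ for some $N$, monotonicity gives $\Gamma_N(I)\leq\Gamma_N(A)=0$. Granting this, I let $k$ be the largest index with $\Gamma_k(I)\neq0$, which exists since $\Gamma_0(I)=I\neq0$ and once some $\Gamma_j(I)=0$ all later terms vanish. Then $\Gamma_{k+1}(I)=0$ forces $\Gamma_k(I)*A=A*\Gamma_k(I)=[\Gamma_k(I),A]_+=0$, which is precisely the condition \eqref{center2} for $\Gamma_k(I)\leq\zeta(A)$; together with $\Gamma_k(I)\leq I$ this gives $0\neq\Gamma_k(I)\leq\zeta(A)\cap I$, and taking $I=A$ yields the ``in particular'' clause.

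It remains to prove $\Gamma_N(A)=0$, which is essentially the converse of Corollary~\ref{nilpotent are both left and right} and is the step I expect to be the main obstacle, since it is here that the two hypotheses must genuinely interact: the terminal pieces of the pure $*$-series and of the additive lower central series lie respectively in $\Fix{A}\cap\ker{\lambda}$ and in $Z(A,+)$, whereas $\zeta(A)$ demands membership in $\ker{\lambda}\cap\Fix{A}\cap Z(A,+)$ simultaneously. I would induct on the additive nilpotency class $c$ of $(A,+)$. For $c=1$ the commutator generators vanish, so $\Gamma_n(A)\leq A^{[n+1]}$, which is $0$ once $n+1\geq m$, where $A^{[m]}=0$ witnesses $*$-nilpotency. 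For $c\geq2$ I would pass to the quotient by an additively central ideal $J\leq Z(A,+)$ with $A/J$ of smaller additive class; the induction hypothesis gives $\Gamma_{N'}(A)\leq J$, and then, $J$ being additively central, all further commutator generators die, so $\Gamma_{N'+k}(A)=\Gamma_k(\Gamma_{N'}(A))\leq\Gamma_k(J)\leq A^{[k+1]}$, which vanishes for $k+1\geq m$. The delicate point—where I would have to invoke the skew-brace identities for $*$ and $[\,\cdot\,,\cdot\,]_+$ in the spirit of Propositions~\ref{left ideals from I and J} and~\ref{ideals by bracket}—is the production of such a $J$: the natural candidates $Z(A,+)$ and $\gamma_c(A,+)$ are $\lambda$-invariant and additively central but need not satisfy $J*A\leq J$, hence are not a priori ideals, the left $*$-action being exactly what escapes the additive filtration. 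Showing that this escape is nonetheless controlled—equivalently, that the mixed series $\Gamma_n$ is dominated by the combined effect of the nilpotent additive structure and the nilpotent $\lambda$-action—is the heart of the proof.
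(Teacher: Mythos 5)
Your reduction is fine as far as it goes: granting $\Gamma_N(A)=0$ for some $N$, the bookkeeping you describe ($\Gamma_n(I)\leq I$ via Proposition~\ref{ideals by bracket} and additive normality, monotonicity of $\Gamma_n$ in its argument, and the identification of the last nonzero term $\Gamma_k(I)$ as a subgroup of $\zeta(A)\cap I$ via \eqref{center2}) is correct and yields the statement. The fatal problem is that everything hinges on the claim that every $*$-nilpotent skew brace of nilpotent type is centrally nilpotent, and this claim is both unproved in your sketch and logically upside down relative to the proposition. In the paper this implication is obtained only for finite braces (Corollary~\ref{finite case}) and only \emph{as a consequence of} Proposition~\ref{center}: the proposition is what makes the upper central series strictly ascend, and finiteness is then needed to conclude it reaches $A$. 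For infinite braces of nilpotent type the implication is established nowhere in the paper (only the abelian-type case is, by a different route), so you are reducing the statement, which carries no finiteness hypothesis, to something strictly stronger and not known to hold in the required generality.

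Your induction on the additive nilpotency class does not close this hole, for exactly the reason you flag and then leave unresolved. In the step $c\geq 2$ you need an ideal $J\leq Z(A,+)$ with $A/J$ of smaller additive class, but $Z(A,+)$ and $\gamma_c(A,+)$ are only left ideals: $J*A\leq J$ can fail, so no such $J$ is available, and quotienting by a non-ideal is not defined. Moreover, even granting such a $J$, the chain $\Gamma_{N'+k}(A)\leq\Gamma_k(J)\leq A^{[k+1]}$ is unjustified: the last inequality needs $[\Gamma_{k-1}(J),A]_+=0$ at every stage, i.e.\ that the whole series $\Gamma_k(J)$ stays inside $Z(A,+)$, which is not automatic since $J*A$ need not lie in $Z(A,+)$. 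So the ``heart of the proof,'' as you call it, is genuinely missing. The paper avoids all of this by a two-step cut-down using external results: right $*$-nilpotency of nilpotent type already forces $\Soc{A}\cap I\neq 0$ by \cite[Theorem 2.8]{NilpotentType}, and left $*$-nilpotency then forces $I\cap\Soc{A}\cap\Fix{A}=I\cap\zeta(A)\neq 0$ by \cite[Proposition 2.26]{NilpotentType}, with no central nilpotency of $A$ needed at any point.
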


\begin{proof}
 Assume that $(A,+,\circ)$ is both left and right $*$-nilpotent. Then $\Soc{A}\cap I\neq 0$ since it is right $*$-nilpotent of nilpotent type (see \cite[Theorem 2.8]{NilpotentType}) and $I\cap \zeta(A)=I	\cap \Soc{A}\cap \Fix{A}\neq 0$ since it is left $*$-nilpotent (\cite[Proposition 2.26]{NilpotentType}). If $I=A$ we have that $\zeta(A)\neq 0$.
\end{proof}

This gives us that finite skew braces are nilpotent if and only if they are $*$-left and $*$-right nilpotent skew braces of nilpotent type.

\begin{corollary}\label{finite case}
Let $(A,+,\circ)$ be a finite skew brace. The following are equivalent
\begin{itemize}
\item[(i)] $(A,+,\circ)$ is a $*$-nilpotent skew brace of nilpotent type.
\item[(ii)] $(A,+,\circ)$ is right $*$-nilpotent skew brace of nilpotent type and $(A,\circ)$ is nilpotent.
\item[(iii)] $(A,+,\circ)$ is a centrally nilpotent skew brace.\end{itemize}
\end{corollary}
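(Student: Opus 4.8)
The plan is to establish the cycle of implications (i)$\Rightarrow$(iii)$\Rightarrow$(ii)$\Rightarrow$(i); the first two links follow quickly from results already proved, and essentially all the content sits in (ii)$\Rightarrow$(i), which is where finiteness is genuinely used.

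For (i)$\Rightarrow$(iii) I would run the upper central series upward, exactly as in the discussion following Proposition~\ref{center}. Being $*$-nilpotent of nilpotent type passes to quotients: $(A,+)/I$ stays nilpotent, and the $*$-series of $A/I$ is the image of that of $A$, so it still terminates. Hence every quotient $A/\zeta_n(A)$ is again $*$-nilpotent of nilpotent type, and as long as it is nonzero Proposition~\ref{center} gives it a nonzero center; by Lemma~\ref{lem:lower_central_series} this means $\zeta_{n+1}(A)/\zeta_n(A)=\zeta(A/\zeta_n(A))\neq 0$, so the series increases strictly until, by finiteness, $\zeta_n(A)=A$ for some $n$, i.e.\ $A$ is centrally nilpotent. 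The implication (iii)$\Rightarrow$(ii) is immediate from Corollary~\ref{nilpotent are both left and right}: a centrally nilpotent brace is $*$-nilpotent (in particular right $*$-nilpotent) of nilpotent type and has $(A,\circ)$ nilpotent.

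For (ii)$\Rightarrow$(i) the brace is already right $*$-nilpotent of nilpotent type by hypothesis, so it suffices to deduce left $*$-nilpotency from the extra assumption that $(A,\circ)$ is nilpotent as well. The key is a Sylow decomposition. For a prime $p$ let $V_p$ be the Sylow $p$-subgroup of $(A,+)$; since $(A,+)$ is nilpotent, $V_p$ is characteristic, hence a left ideal, and therefore a subgroup of $(A,\circ)$ of $p$-power order. As $|V_p|$ is the full $p$-part of $|A|=|(A,\circ)|$, it is a Sylow $p$-subgroup of $(A,\circ)$, and since $(A,\circ)$ is nilpotent it is the unique, normal one. Thus $V_p$ is $\lambda$-invariant and normal in both $(A,+)$ and $(A,\circ)$, i.e.\ an ideal, and the $V_p$ realize $A$ as the direct product of the skew braces $V_p$: for $p\neq q$ we have $V_p*V_q\subseteq V_p\cap V_q=0$, and likewise $V_q*V_p=[V_p,V_q]_+=0$, so distinct factors act trivially on one another and $*$ is computed componentwise.

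It then remains to check each factor, and here the primes match up: on $V_p$ the mapping $\lambda$ takes values in a $p$-group (a quotient of the $p$-group $(V_p,\circ)$) acting on the $p$-group $(V_p,+)$, so $\lambda(V_p)\ltimes(V_p,+)$ is a finite $p$-group and hence nilpotent. Therefore this action is nilpotent, which is exactly the statement that the left $*$-series of $V_p$ terminates, so each $V_p$ is left $*$-nilpotent; since $*$ is componentwise on the product we obtain $A^{\,n}=\prod_p V_p^{\,n}=0$ for large $n$, and $A$ is left $*$-nilpotent, giving (i). The main obstacle is precisely this decomposition: one must guarantee that the $p$-part of $(A,+)$ is genuinely normal in $(A,\circ)$, so that the cross terms $V_p*V_q$ vanish. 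This is what breaks down without the nilpotency of $(A,\circ)$: the non-abelian skew brace of order $21$, with $(A,+)=\mathbb{Z}/21$ and $(A,\circ)=\mathbb{Z}/7\rtimes\mathbb{Z}/3$, is right $*$-nilpotent of nilpotent type but not left $*$-nilpotent, so the hypothesis that $(A,\circ)$ is nilpotent is indispensable here.
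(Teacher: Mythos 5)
Your proposal is correct, but on the crucial equivalence it takes a genuinely different route from the paper. The paper proves (i) $\Rightarrow$ (iii) exactly as you do (Proposition~\ref{center} applied to the successive quotients $A/\zeta_n(A)$, which inherit $*$-nilpotency of nilpotent type, plus finiteness), and it gets (iii) $\Rightarrow$ (i) from Corollary~\ref{nilpotent are both left and right}; but the equivalence (i) $\Leftrightarrow$ (ii) is handled purely by citation of Theorems 4.8 and 2.20 of \cite{NilpotentType}. You instead arrange the cycle (i) $\Rightarrow$ (iii) $\Rightarrow$ (ii) $\Rightarrow$ (i) and prove the only missing link, (ii) $\Rightarrow$ (i), from scratch: the Sylow subgroups $V_p$ of $(A,+)$ are characteristic, hence left ideals, hence Sylow subgroups of $(A,\circ)$ of the right order, hence normal there by nilpotency of $(A,\circ)$, so they are ideals; the cross terms $V_p*V_q$, $V_q*V_p$ and $[V_p,V_q]_+$ vanish, so $A$ decomposes as the direct product of the braces $V_p$ and the $*$-operation is componentwise; finally each $V_p$ is left $*$-nilpotent because its left series is trapped in the lower central series of the finite $p$-group $\lambda(V_p)\ltimes (V_p,+)$, namely $V_p^{\,n}\subseteq \gamma_n\bigl(\lambda(V_p)\ltimes (V_p,+)\bigr)$, using $x*y=[\lambda_x,y]$ in the semidirect product. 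This is in substance a self-contained proof of the easy direction of the cited Theorem 4.8 of \cite{NilpotentType}. What each approach buys: the paper's proof is shorter; yours is self-contained and, thanks to the cyclic arrangement, never needs the harder cited direction (left $*$-nilpotency implies nilpotency of $(A,\circ)$), since central nilpotency delivers nilpotency of $(A,\circ)$ for free via Corollary~\ref{nilpotent are both left and right}. Two minor points: the step hiding behind ``this action is nilpotent'' deserves the explicit inductive line $V_p^{\,n+1}=V_p*V_p^{\,n}\subseteq[G,\gamma_n(G)]=\gamma_{n+1}(G)$ for $G=\lambda(V_p)\ltimes(V_p,+)$; and your closing order-$21$ example (right $*$-nilpotent of abelian type, $(A,\circ)\cong\Z/7\rtimes\Z/3$ not nilpotent, not left $*$-nilpotent) is indeed correct and shows the hypothesis on $(A,\circ)$ in (ii) cannot be dropped.
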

%
%

\begin{proof}
The equivalence	(i) $\Leftrightarrow$ (ii) follows by \cite[Theorem 4.8]{NilpotentType} and \cite[Theorem 2.20]{NilpotentType} and the implication (iii) $\Rightarrow$ (i) by Corollary \ref{nilpotent are both left and right}.\newline
(i) $\Rightarrow$ (iii) According to Proposition \ref{center} the series of the ideals in \eqref{centers} is ascending. Since $A$ is finite, then $A=\zeta_n(A)$ for some $n\in \mathbb{N}$.
\end{proof}

For skew braces of abelian type we can extend Corollary \ref{finite case} to the infinite case. Notice that
if $A$ is of abelian type then $\zeta(A)=\ker{\lambda}\cap Fix(A)$.

\begin{proposition}\label{nilpotent of abelian type}
	Let $(A,+,\circ)$ be a skew brace of abelian type. The following are equivalent:
	\begin{itemize}
		\item[(i)] $(A,+,\circ)$ is $*$-nilpotent. 
		\item[(ii)] $(A,+,\circ)$ is right $*$-nilpotent and $(A,\circ)$ is nilpotent.
		\item[(iii)] $(A,+,\circ)$ is centrally nilpotent.
\end{itemize}
\end{proposition}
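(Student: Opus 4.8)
The plan is to prove the cycle (i) $\Rightarrow$ (iii) $\Rightarrow$ (ii) $\Rightarrow$ (i), reusing as much of the finite case as possible. The implication (iii) $\Rightarrow$ (ii) is immediate from Corollary~\ref{nilpotent are both left and right}, which holds for arbitrary skew braces, and (ii) $\Rightarrow$ (i) follows from \cite[Theorem 4.8]{NilpotentType} and \cite[Theorem 2.20]{NilpotentType} exactly as in Corollary~\ref{finite case}, since those statements are valid for skew braces of nilpotent type and do not invoke finiteness. The only step that genuinely used finiteness in Corollary~\ref{finite case} was (i) $\Rightarrow$ (iii), where the strict ascent of the upper central series guaranteed by Proposition~\ref{center} was combined with $|A|<\infty$ to force termination. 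First I would therefore isolate this implication and replace the finiteness argument by a direct comparison of the lower central series $\Gamma_n$ with the $*$-nilpotency series $A^{[n]}$.

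The heart of the argument is the inclusion $\Gamma_n(A)\subseteq A^{[n+1]}$, valid for every skew brace of abelian type. Here the hypothesis that $(A,+)$ is abelian is essential: it makes the additive commutators $[\Gamma_{n-1}(A),A]_+$ trivial, so that the recursion \eqref{gammas} collapses to $\Gamma_n(A)=\langle \Gamma_{n-1}(A)*A,\,A*\Gamma_{n-1}(A)\rangle_+$. I would prove the inclusion by induction on $n$, the base case being the equality $\Gamma_0(A)=A=A^{[1]}$. For the inductive step, assuming $\Gamma_{n-1}(A)\subseteq A^{[n]}$ and using monotonicity of the $*$-product together with $A=A^{[1]}$, one gets $\Gamma_{n-1}(A)*A\subseteq A^{[n]}*A^{[1]}$ and $A*\Gamma_{n-1}(A)\subseteq A^{[1]}*A^{[n]}$; both of these appear among the pieces $A^{[i]}*A^{[n+1-i]}$ defining $A^{[n+1]}$ (namely $i=n$ and $i=1$), whence $\Gamma_n(A)\subseteq A^{[n+1]}$.

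With this in hand, (i) $\Rightarrow$ (iii) is immediate. If $A$ is $*$-nilpotent then $A^{[N]}=0$ for some $N$, so the inclusion gives $\Gamma_{N-1}(A)\subseteq A^{[N]}=0$. Taking $k=0$ in Lemma~\ref{gamma and zeta}, and recalling $\zeta_0(A)=0$, the identity $\Gamma_{N-1}(A)=0$ is equivalent to $A=\zeta_{N-1}(A)$; by Lemma~\ref{lem:lower_central_series} and Definition~\ref{nilpotent} this says precisely that $A$ is centrally nilpotent. This is exactly the point at which the abelian hypothesis does the work that finiteness did in Corollary~\ref{finite case}.

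The hard part is conceptual rather than computational: one must notice that only the two outermost pieces of $A^{[n+1]}$ are needed, so the inclusion goes in the convenient direction and no reverse comparison (which would genuinely require controlling the mixed products $A^{[i]}*A^{[n+1-i]}$) is necessary. The remaining point of care is to confirm that the cited results of \cite{NilpotentType} used for (ii) $\Rightarrow$ (i) are indeed stated for skew braces of nilpotent type in general, not merely for finite ones; granting this, the abelian-type hypothesis together with the inclusion $\Gamma_n(A)\subseteq A^{[n+1]}$ suffices to close the cycle in the infinite case.
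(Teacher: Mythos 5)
Your implication (i) $\Rightarrow$ (iii) is correct, and it is a legitimate variant of the paper's argument: where the paper inducts on the length of the series $A^{[n]}$, passing to the quotient $A/\zeta(A)$ at each step, you prove the inclusion $\Gamma_n(A)\subseteq A^{[n+1]}$ directly (using that the additive commutators vanish in the abelian-type case, so the recursion \eqref{gammas} only involves the two $*$-products) and then invoke Lemma~\ref{gamma and zeta} with $k=0$. Both routes are sound; yours avoids the quotient step at the cost of an explicit induction along the series, and it makes transparent exactly where the abelian hypothesis enters. The implication (iii) $\Rightarrow$ (ii) via Corollary~\ref{nilpotent are both left and right} is likewise fine.

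The gap is in (ii) $\Rightarrow$ (i). You justify it by citing \cite[Theorem 4.8]{NilpotentType} and \cite[Theorem 2.20]{NilpotentType} and asserting that these results ``do not invoke finiteness.'' That assertion is false: Theorem 4.8 of \cite{NilpotentType}, which is the ingredient giving that nilpotency of $(A,\circ)$ implies left $*$-nilpotency, is stated and proved for \emph{finite} skew braces of nilpotent type; its proof relies on the structure theory of finite nilpotent groups and does not extend verbatim to infinite braces. This is precisely why the paper's Corollary~\ref{finite case} carries a finiteness hypothesis, and why, in the abelian-type proposition at hand, the paper switches to Smoktunowicz's result \cite[Theorem 3]{Engel}, which is proved for possibly infinite braces of abelian type and yields exactly the equivalence (i) $\Leftrightarrow$ (ii). Your cycle can be repaired simply by replacing the citation in (ii) $\Rightarrow$ (i) with \cite[Theorem 3]{Engel}; as written, however, that leg of the argument rests on an unverified (and in fact incorrect) claim about the scope of the cited theorems, so the proof is incomplete.
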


\begin{proof}
	
The equivalence	(i) $\Leftrightarrow$ (ii) is \cite[Theorem 3]{Engel} and (iii) $\Rightarrow$ (i) is true in general (see Corollary \ref{nilpotent are both left and right}).\\
	(i) $\Rightarrow$ (iii) Assume that $(A,+,\circ)$ is both right and left $*$-nilpotent. Then there exists $n\in \mathbb{N}$ such that $A^{[n]}=0$ according to \cite[Theorem 3]{Engel}. Let proceed by induction on $n$.\\
	If $A^{[1]}=0$ then $A$ is the trivial group and in particular it is a nilpotent brace.
Assume that $A^{[n]}=0$, then $A^{[n-1]}*A=A*A^{[n-1]}=0$, therefore $A^{[n-1]}\leq \Fix{A}\cap \ker{\lambda}=\zeta(A)$.
Hence $\left(A/\zeta(A)\right)^{[n-1]}=0$. By induction, $A/\zeta(A)$ is centrally nilpotent and $A$ is centrally nilpotent as well.
\end{proof}

We are going to show a construction of braces, similar to the construction given in \cite[Section 9]{BCJO} using bilinear mappings. Actually,
this construction is a special case of the central extension given in~Theorem~\ref{cen_ext}.
Recall that bilinear mappings are special cases of group cocycles.
Let $H$ be a group and $K$ be an abelian group. We denote by $K\times_\theta H$ the central extension of $H$ by $K$ via the group cocycle $\theta$ (if $\theta=0$ then $K\times_\theta H$ is the usual direct product of groups).  

\begin{proposition}\label{with bilinear}
	Let $(H,+,\circ)$ be a centrally nilpotent skew brace of length $n$, $K$ be an abelian group and $\theta:H\times H\longrightarrow (K,+)$ be a bilinear map with respect to both $+$ and $\circ$. Then $A=((K,+)\times (H,+),(K,+)\times_\theta (H,\circ))$ is a centrally nilpotent brace of length less or equal to $n+1$.
\end{proposition}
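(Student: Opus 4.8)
The plan is to exhibit an explicit central chain of ideals for $A$ by lifting a witnessing chain from $H$. Since $(H,+,\circ)$ is centrally nilpotent of class $n$, Theorem~(equivalence of (i) and (ii)) gives a chain of ideals $0=J_0\leq J_1\leq\ldots\leq J_n=H$ with $J_{j+1}/J_j\leq\zeta(H/J_j)$. First I would identify $K$ with the ideal $K\times\{0\}$ of $A$ and verify it sits inside $\zeta(A)$: because $\theta$ is bilinear with respect to both $+$ and $\circ$, elements of $K$ are fixed by all $\lambda$ mappings and central in both group structures, so $K\leq\Soc{A}\cap\Fix{A}=\zeta(A)$, and hence $K$ is an ideal. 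Then I would check that $A/K\cong H$ as skew braces via the second projection, which is the content of the central extension data.

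The core construction is then the chain
\begin{equation*}
0\leq K\leq K\times J_1\leq K\times J_2\leq\ldots\leq K\times J_n=A,
\end{equation*}
where each $K\times J_j$ is the preimage under $A\to H$ of $J_j$. Each such preimage is an ideal of $A$ since it is the preimage of an ideal under a skew brace homomorphism. The first quotient step $K/0\leq\zeta(A)$ is handled by the paragraph above. For the remaining steps I would use that the isomorphism $A/K\cong H$ descends to isomorphisms $A/(K\times J_j)\cong H/J_j$, under which $(K\times J_{j+1})/(K\times J_j)$ corresponds to $J_{j+1}/J_j$; since $\zeta$ is defined purely through the operations $*$, $\circ$ and $[\,\cdot\,,\cdot\,]_+$ that are preserved by isomorphism, the centrality condition $J_{j+1}/J_j\leq\zeta(H/J_j)$ transfers to $(K\times J_{j+1})/(K\times J_j)\leq\zeta(A/(K\times J_j))$. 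This produces a central chain of length $n+1$ for $A$, giving central nilpotency of class at most $n+1$.

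The step I expect to demand the most care is the very first one, namely proving $K\leq\zeta(A)$ directly from the bilinearity of $\theta$. One must unwind how the cocycle $\theta$ enters the $\circ$ operation on the product set and confirm that adding a $K$-component neither disturbs the $\lambda$ action nor breaks commutativity in either group; the identity \eqref{comm1} shows that $\Fix{A}\cap\ker{\lambda}$ is insensitive to additive shifts, which is exactly the flavour of computation needed, but here it must be carried out against the twisted $\circ$ built from $\theta$. A secondary technical point is checking that $K\times_\theta(H,\circ)$ is genuinely a group and that the brace axiom \eqref{axiom} holds on $A$, i.e.\ that the additive and multiplicative cocycle conditions encoded in the bilinearity of $\theta$ are precisely what is required by Lemma~\ref{condition on lambda}; once that compatibility is secured, the chain above does the rest and no further commutator computations are needed. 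The whole argument is in fact the specialization of the general central extension in Theorem~\ref{cen_ext}, so an alternative is to defer the verification to that result, but I would prefer to record the direct chain since it makes the bound $n+1$ transparent.
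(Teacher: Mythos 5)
Your proposal is correct and follows essentially the same route as the paper's proof: show $K\times\{0\}$ is an ideal contained in $\zeta(A)$ using bilinearity of $\theta$, observe $A/(K\times\{0\})\cong H$, and conclude that central nilpotency of class $n$ lifts to class at most $n+1$. The only difference is one of emphasis: the bulk of the paper's proof is the explicit computation of $\lambda_{(k_1,h_1)}$ and the verification via Lemma~\ref{condition on lambda} that $A$ is indeed a skew brace --- the step you defer as a ``secondary technical point'' --- whereas the chain-lifting argument you spell out in detail is dispatched there in a single closing sentence.
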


\begin{proof}
	Let $h_1,h_2\in H$ and $k_1,k_2\in K$. Then
	\begin{align*}
	\lambda_{(k_1,h_1)}((k_2,h_2))&=(-k_1,-h_1)+(k_1,h_1)\circ (k_2,h_2)=\\
	&=(-k_1,-h_1)+(k_1+k_2+\theta(h_1,h_2),h_1\circ h_2)\\
	&=(k_2+\theta(h_1,h_2),-h_1+h_1\circ h_2)\\
	&=(k_2+\theta(h_1,h_2),\lambda_{h_1}(h_2)).
	\end{align*}
	Hence $\lambda_{(h_1,k_1)}$ is bijective and it is an automorphism of $(A,+)$ since $\theta$ is bilinear. Moreover $I=K\times \{0\}\leq \ker{\lambda}\cap Fix(\lambda)\cap Z(G,+)=\zeta(A)$ and in particular it is an ideal.
	Moreover
	\begin{align*}
	\lambda_{(k_1,h_1)+\lambda_{(k_1,h_1)}(k_2,h_2)}(x,y)&=\lambda_{(k_1+k_2+\theta(h_1,h_2),h_1+\lambda_{h_1}(h_2))}(x,y)\\
	&=\lambda_{(k_1+k_2+\theta(h_1,h_2),h_1\circ h_2)}(x,y)\\
	&=(x+\theta(h_1\circ h_2,y),\lambda_{h_1}\lambda_{h_2}(y))\\
		\lambda_{(k_1,h_1)}\lambda_{(k_2,h_2)}(x,y)&=\lambda_{(k_1,h_1)}((x+\theta(h_2,y),\lambda_{h_2}(y))\\
		&=((x+\theta(h_2,y)+\theta(h_1,\lambda_{h_2}(y)),\lambda_{h_1}\lambda_{h_2}(y))\\
		&=((x+\theta(h_2,y)+\theta(h_1,y),\lambda_{h_1}\lambda_{h_2}(y)).
	\end{align*}
	The two expression above are equal, so according to Lemma \ref{condition on lambda}, $A$ is a skew brace. Since $I\leq \zeta(A)$ and $A/I=(H,+,\circ)$ then $A$ is centrally nilpotent of length at most $n+1$.
\end{proof}

\begin{corollary}\label{with bilinear2}
	Let $H, K$ be abelian groups, $\theta:H\times H\longrightarrow K$ be a bilinear map. Then $A=(H\times K, H\times_\theta K)$ is a centrally nilpotent skew brace of length $2$.
\end{corollary}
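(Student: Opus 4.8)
The plan is to read this off from Proposition~\ref{with bilinear} with $n=1$. The input skew brace should be the \emph{trivial} brace $(H,+,+)$ on the abelian group $H$, i.e.\ the one whose circle operation is the addition. First I would check that $(H,+,+)$ is a skew brace: with $\circ=+$ the axiom \eqref{axiom} becomes $x+(y+z)=(x+y)-x+(x+z)$, which holds because $(H,+)$ is abelian. For this brace one has $\lambda_x=\mathrm{id}$ for every $x$, hence $x*y=\lambda_x(y)-y=0$ and $[x,y]_+=0$ for all $x,y\in H$; comparing with \eqref{center2} this gives $\zeta(H)=H$, so $(H,+,+)$ is centrally nilpotent of length $1$.

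Next I would reconcile the hypotheses on the cocycle. Proposition~\ref{with bilinear} requires $\theta\colon H\times H\to K$ to be bilinear with respect to both $+$ and $\circ$; since here $\circ=+$, these two requirements coincide, and the plain bilinearity assumed in the corollary is exactly what is needed. Feeding the trivial brace and $\theta$ into the proposition with $n=1$ then gives that $A$ is centrally nilpotent of length at most $n+1=2$. The only discrepancy with the statement is the order of the two direct factors ($H\times K$ here versus $K\times H$ there); as both factors are abelian groups this is a harmless relabelling of coordinates, affecting neither the two group structures nor the cocycle, so I would simply record the identification of $A=((K,+)\times(H,+),\,(K,+)\times_\theta(H,+))$ with $(H\times K,\,H\times_\theta K)$.

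Finally, to obtain the exact value $2$ rather than merely ``at most $2$'', I would compute $\zeta(A)$ by hand. Transcribing the formula for $\lambda$ from the proof of Proposition~\ref{with bilinear} into the present coordinates yields $\lambda_{(h_1,k_1)}(h_2,k_2)=(h_2,\,k_2+\theta(h_1,h_2))$; since $A$ is of abelian type, $\zeta(A)=\ker{\lambda}\cap\Fix{A}=\{(h,k):\theta(h,h')=\theta(h',h)=0\text{ for all }h'\in H\}$. Thus $\zeta(A)=A$ forces $\theta\equiv 0$, so as soon as $\theta$ is not identically zero the length is not $1$ and is therefore exactly $2$. I anticipate no real obstacle: all the substantive work is already packaged inside Proposition~\ref{with bilinear}, and what remains is the elementary observation that the trivial brace on an abelian group equals its own center, together with the short center computation just indicated.
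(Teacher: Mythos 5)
Your proof is correct and is exactly the argument the paper intends: the corollary is stated as an immediate specialization of Proposition~\ref{with bilinear} to the trivial brace $(H,+,+)$, which is centrally nilpotent of class $1$ because $H$ is abelian, and the reordering of the factors $K$ and $H$ is indeed harmless. Your additional computation of $\zeta(A)$ showing the class is exactly $2$ whenever $\theta\not\equiv 0$ is a worthwhile refinement, since Proposition~\ref{with bilinear} by itself only yields ``length at most $2$''.
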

%

\begin{example}
	Let $E,F,A$ be abelian groups and $\omega:E\times F\longrightarrow A$ be a bilinear mapping. Let $\mathbb{H}(\omega)=(E\times F\times A, \circ)$ is the generalized Heisenberg group defined as 
	\[(e_1,f_1,a_1)\circ (e_2,f_2,a_2)=
	 (e_1+e_2,f_1+f_2,a_1+a_2+\omega(e_1,f_2)),
	\]
	for every $e_1,e_2\in E$, $f_1,f_2\in F$ and $a_1,a_2\in A$. Actually, this operation is usually displayed in a matrix-like style, namely
	\begin{displaymath}
	\begin{bmatrix}
	1 & e_1 & a_1\\
	0 & 1 &f_1\\
	0 & 0 &1
	\end{bmatrix}\circ \begin{bmatrix}
	1 & e_2 & a_2\\
	0 & 1 &f_2\\
	0 & 0 &1
	\end{bmatrix}=\begin{bmatrix}
	1 & e_1+e_2 & a_1+a_2+\omega(e_1,f_2)\\
	0 & 1 &f_1+f_2\\
	0 & 0 &1
	\end{bmatrix}.
	\end{displaymath}
	 Then $(E\times F\times E, +,\circ)$ where $(E\times F\times A,+)$ is the direct product of the three groups and $(E\times F\times A, \circ)=\mathbb{H}(\omega)$ is a nilpotent brace of length $2$. See \cite{Heisenberg} for further details about Heisenberg groups.
\end{example}

\section{A universal algebraic motivation}

The motivation why we came up with the definition of the
central nilpotency is
 the notion of nilpotency coming from the universal algebraic commutator theory developed in \cite{comm}.
  The general theory is very technical
and therefore we present the notions in
the narrower class of expanded groups, a class that contains groups, rings, vector spaces, skew braces etc.

\begin{definition}
 An {\em expanded group} is a set~$A$ with several operations of several arities,
 among them a binary operation~$+$, a~unary operation~$-$ and a constant~$0$ such that
 $(A,+,-,0)$ is a group.\newline
 A $n$-ary {\em polynomial} of~$A$ is a function $A^n\to A$ such that there exist constants~$c_1,\ldots,c_k$ and a $n+k$-ary term~$t$ such that $f(x_1,\ldots,x_n)=t(c_1,\ldots,c_k,x_1,\ldots,x_n)$.
 We denote by $\Pol_n(A)$ the set of all
 $n$-ary polynomials of~$A$.
\end{definition}

 Examples of expanded groups are, for instance,
 rings or vector spaces.
 In the class of rings, polynomials are the classical polynomial functions. In the class of vector spaces, polynomials are linear combinations plus a vector.
 Polynomials are used to define commutators.
 
 \begin{definition} Let~$A$ be an expanded group.
  Let $f(x_1,\ldots,x_n)\in\Pol_n(A)$.
  We say that $f$ is {\em absorbing} if,
  for all $1\leq i\leq n$ and all $a_j\in A$,
  with $1\leq j\leq n$,
  $f(a_1,\ldots,a_{i-1},0,a_{i+1},\ldots,a_n)=0$.
  A~subset~$I$ of an expanded group~$A$ is called an~\emph{ideal} if it there exists an endomorphism~$\varphi$ of~$A$ such that $\varphi(i)=0$ if and only if~$i\in I$.
  Let $I,J$ be two ideals of~$A$. A {\em commutator} $\comm IJ$ of two ideals is defined as the ideal generated by the set
  \[ \{ f(i,j) \ : \ i\in I, j\in J, f\in\Pol_2(A),\> f \text{ is absorbing}\}.\]
 \end{definition}

 In the class of groups, the only binary polynomials with $f(x,0)=f(0,x)=0$ are some combinations of (element-wise) commutators and therefore $\comm IJ=[I,J]$. In the class of rings, the commutator
 is the smallest ideal containing $I\cdot J+J\cdot I$, since $f=x\cdot y$ is a binary absorbing polynomial.
 In the class of vector spaces, there are no non-constant absorbing polynomials and
 therefore the commutator of two subspaces is the trivial subspace.

\begin{proposition}\label{necessary cond for [,]=1}
Let $(A,+,\circ)$ be a skew brace and $I,J$ be ideals. Then $I*J+J*I+[I,J]_+\leq \comm{I}{J}$.
\end{proposition}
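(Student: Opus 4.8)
The plan is to reduce everything to the task of exhibiting, for each of the three summands, an absorbing binary polynomial of $A$ whose values witness the required generators, and then to invoke the fact that $\comm{I}{J}$, being an ideal, is in particular a subgroup of $(A,+)$. Thus the entire argument is a matter of writing down the right polynomials and verifying the absorption condition.

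First I would handle the additive commutator. Consider the binary term $f(x,y)=x+y-x-y=[x,y]_+$. This is a polynomial of the expanded group $(A,+,\circ)$ built only from $+$ and $-$, and it is absorbing because $f(0,y)=y-y=0$ and $f(x,0)=x-x=0$. Hence $[i,j]_+=f(i,j)\in\comm{I}{J}$ for all $i\in I$, $j\in J$, and since $\comm{I}{J}$ is closed under $+$ we get $[I,J]_+\le\comm{I}{J}$.

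Next I would treat $I*J$. Here the key observation is that the $*$ operation is itself a binary term, namely $g(x,y)=-x+(x\circ y)-y=x*y$, a polynomial in the operations $+,-,\circ$ of the skew brace. The crux is to check that $g$ is absorbing. For $g(0,y)$ one uses $\lambda_0=\mathrm{id}$ (which holds because $\lambda\colon(A,\circ)\to\aut(A,+)$ is a homomorphism and $0$ is the identity of $(A,\circ)$), giving $g(0,y)=0*y=\lambda_0(y)-y=0$; for $g(x,0)$ one uses that the automorphism $\lambda_x$ fixes $0$, giving $g(x,0)=x*0=\lambda_x(0)=0$. Thus $g$ is absorbing, so $i*j=g(i,j)\in\comm{I}{J}$ for all $i\in I$, $j\in J$; as $I*J$ is by definition the additive subgroup generated by such elements and $\comm{I}{J}$ is an additive subgroup, $I*J\le\comm{I}{J}$. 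The inclusion $J*I\le\comm{I}{J}$ follows in exactly the same way using the absorbing polynomial $h(x,y)=y*x$, whose values $h(i,j)=j*i$ again lie in $\comm{I}{J}$.

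Finally, since $\comm{I}{J}$ is an ideal and hence a subgroup of $(A,+)$ containing each of $I*J$, $J*I$ and $[I,J]_+$, it contains their sum, which yields $I*J+J*I+[I,J]_+\le\comm{I}{J}$. I do not expect any genuine obstacle here: the only point requiring care is the absorption check for the $*$-polynomial, and this rests solely on the elementary identities $\lambda_0=\mathrm{id}$ and $\lambda_x(0)=0$.
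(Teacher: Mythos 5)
Your proposal is correct and follows exactly the paper's approach: the paper's entire proof consists of naming the same three absorbing polynomials $x*y$, $y*x$ and $x+y-x-y$. You merely spell out the absorption checks (via $\lambda_0=\mathrm{id}$ and $\lambda_x(0)=0$) and the closure of the ideal $\comm{I}{J}$ under $+$, which the paper leaves implicit.
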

\begin{proof}
Consider the absorbing polynomials $x*y$, $y*x$ and $x+y-x-y$.
\end{proof}

\begin{problem}
Let $(A,+,\circ)$ be a skew brace and $I,J$ be ideals. Is it true that $\comm{I}{J}$ is the smallest ideal containing $I*J+J*I+[I,J]_+$? Does the equality $ \comm{I}{J}=I*J+J*I+[I,J]_+$ hold?

\end{problem}


\begin{definition}\label{def of abelian}
 Let $A$ be an expanded group. We say, that an ideal~$I$ is {\em central}
 if $\comm AI=0$. 
 We say that $A$ is {\em abelian} if $A$ is central in~$A$.
 The {\em center} of~$A$
 is the largest central ideal of~$A$.
\end{definition}

\begin{example}We give a few examples of centers of expanded groups:

\begin{itemize}

\item[(i)] For a ring~$R$, an ideal is central if and only if it lies in~$\mathrm{Ann}_R(R)$. Therefore, the center of a ring is its annihilator. And a ring is abelian if and only if $R\cdot R=0$.
\item[(ii)] Every vector space is abelian since the commutator of two subspaces is always trivial.
\item[(iii)] A prototypical binary absorbing polynomial for Lie algebras is the Lie bracket. Therefore
the center of a Lie algebra is the radical of the Lie bracket.

\end{itemize}
\end{example}

\begin{corollary}\label{cor:central_ideals}
Let $(A,+,\circ)$ be a skew brace. Then every central ideal of~$A$ is contained in $\zeta(A)$. 
\end{corollary}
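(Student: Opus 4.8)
The plan is to show that any central ideal $I$ (i.e.\ an ideal with $\comm{A}{I}=0$ in the universal-algebraic sense) satisfies the three conditions defining $\zeta(A)$, namely $x*y=y*x=[x,y]_+=0$ for every $x\in I$ and $y\in A$. The key observation is Proposition~\ref{necessary cond for [,]=1}, which I would apply with $J=A$: it gives
\begin{equation*}
I*A+A*I+[I,A]_+\leq \comm{I}{A}.
\end{equation*}

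First I would note that the commutator is symmetric, so $\comm{I}{A}=\comm{A}{I}$; this symmetry is part of the standard commutator theory of \cite{comm}, but one can also see it directly here since the generating absorbing polynomials can have their two arguments interchanged. Thus the centrality hypothesis $\comm{A}{I}=0$ yields $\comm{I}{A}=0$, and combining this with the displayed inequality forces
\begin{equation*}
I*A+A*I+[I,A]_+\leq 0,
\end{equation*}
that is, $I*A=A*I=[I,A]_+=0$. In particular, for every $x\in I$ and every $y\in A$ we have $x*y=0$, $y*x=0$, and $[x,y]_+=0$.

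Next I would read off from the characterization of the center in~\eqref{center2} that these are exactly the defining conditions for membership in $\zeta(A)$: the set $\zeta(A)$ consists precisely of those $x\in A$ with $x*y=y*x=[x,y]_+=0$ for all $y\in A$. Hence every element of a central ideal lies in $\zeta(A)$, which gives $I\leq\zeta(A)$ and completes the argument.

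I expect the only genuine subtlety to be the appeal to symmetry of the commutator, since Proposition~\ref{necessary cond for [,]=1} as stated bundles both $I*J$ and $J*I$ together and is therefore already symmetric in the relevant sense; applying it with the pair $(I,A)$ directly produces all three terms $I*A$, $A*I$, $[I,A]_+$ inside $\comm{I}{A}$, so one does not even need to invoke symmetry separately. The remaining steps are purely formal translations between the vanishing of these $*$- and $+$-commutator terms and the explicit description of $\zeta(A)$ in~\eqref{center2}, so no real obstacle should arise.
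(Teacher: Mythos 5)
Your proof is correct and is essentially the paper's intended argument: the corollary is stated without proof precisely because it follows immediately from Proposition~\ref{necessary cond for [,]=1} together with the description \eqref{center2} of $\zeta(A)$. The detour through symmetry of the commutator is unnecessary --- instantiating the proposition with the ordered pair $(A,I)$ gives $A*I+I*A+[A,I]_+\leq\comm{A}{I}=0$ at once, as you yourself observe at the end.
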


In the theory of left braces the word {\em abelian} is already used for left braces with both the groups abelian, not necessarily isomorphic. Which is actually a much wider class than abelian skew braces in the sense of the commutator theory.

\begin{corollary}\label{Cor:Abelianness}
Let $(A,+,\circ)$ be a skew brace. The following are equivalent:
\begin{itemize}
\item[(i)] $(A,+,\circ)$ is abelian (in the sense of Definition \ref{def of abelian}).
\item[(ii)] $(A,+)=(A,\circ)$ is an abelian group.
\item[(iii)] $A*A= [A,A]_+=0$.
\end{itemize}
\end{corollary}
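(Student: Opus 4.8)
The plan is to establish the cycle (i)$\Rightarrow$(iii)$\Rightarrow$(ii)$\Rightarrow$(i), of which only the last implication carries real content. The first implication is essentially already proved: specializing Proposition~\ref{necessary cond for [,]=1} to $I=J=A$ gives $A*A+[A,A]_+\leq\comm{A}{A}$, so if $A$ is abelian, meaning $\comm{A}{A}=0$ by Definition~\ref{def of abelian}, then both $A*A=0$ and $[A,A]_+=0$ follow at once.

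Next I would unwind (iii) into (ii) by bookkeeping. The identity $A*A=0$ says $x*y=\lambda_x(y)-y=0$ for all $x,y$, hence $\lambda_x=\mathrm{id}$ for every $x$; since $x\circ y=x+\lambda_x(y)$, this forces $x\circ y=x+y$, so the two group operations coincide. The identity $[A,A]_+=0$ says that $(A,+)$ is abelian. Together they give that $(A,+)=(A,\circ)$ is a single abelian group, which is (ii). (The converse direction (ii)$\Rightarrow$(iii) is equally immediate, should one prefer to present (ii) and (iii) as a separate easy equivalence.)

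The substantive step is (ii)$\Rightarrow$(i), and here is where I expect the only difficulty. Once $\circ=+$ and $(A,+)$ is abelian, the two extra skew brace operations degenerate: $\circ$ is the same function as $+$, and the $\circ$-inverse $\bar x$ equals $-x$. Hence every term function of the algebra $A$ is already a term function of the plain abelian group $(A,+)$, so $\Pol_2(A)=\Pol_2(A,+)$ as sets of functions. In the variety of abelian groups each binary polynomial has the shape $f(x,y)=mx+ny+c$ with $m,n\in\Z$ and $c\in A$; imposing the absorbing conditions $f(x,0)=f(0,x)=0$ forces first $c=0$ and then $mx=ny=0$ for all $x,y$. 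Such an $f$ vanishes identically, so the set generating $\comm{A}{A}$ reduces to $\{0\}$ and $\comm{A}{A}=0$, i.e.\ $A$ is abelian.

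The delicate point throughout is precisely this collapse of the polynomial clone: the worry is that some absorbing binary polynomial might be hidden among the operations built from $\circ$ and $\bar{\cdot}$. The resolution is the observation that, once the two operations are literally equal, $A$ is term-equivalent to the abelian group $(A,+)$, after which the classification of binary terms in abelian groups closes the argument.
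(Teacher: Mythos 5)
Your proof is correct. The paper actually states this corollary without any proof, so there is nothing to diverge from: your chain (i)$\Rightarrow$(iii) via Proposition~\ref{necessary cond for [,]=1} with $I=J=A$, the direct computation (iii)$\Leftrightarrow$(ii) from $\lambda_x=\mathrm{id}$, and (ii)$\Rightarrow$(i) by observing that $A$ becomes polynomially equivalent to the abelian group $(A,+)$, whose only absorbing binary polynomials ($f(x,y)=mx+ny+c$ with the absorbing conditions forcing $f\equiv 0$) are trivial, is exactly the natural filling-in of the argument the paper leaves implicit (compare its remark that vector spaces are abelian because all absorbing polynomials are trivial). The one point worth making explicit, which you do, is that $\Pol_2(A)=\Pol_2(A,+)$ holds because clones are determined by operations as functions, not syntactically, so no absorbing polynomial can ``hide'' in $\circ$ or $\overline{(\,)}$.
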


Fortunately, the notion of a {\em center} of a skew brace has not been used until the beginning of our Section~2. Now we shall prove that this definition is conforming with the commutator theory,
that means, that $\zeta(A)$ is the center of~$A$ in the sense of Definition~\ref{def of abelian}.
For our proofs
concerning the centralizing relation for ideals, we work with terms of skew braces as the terms in the language $\{+,-,*,\overline{(\, )},0 \}$:
it is not hard to see that this language is equivalent since, having a term
in the language $\{+,-,\circ,\overline{(\, )},0 \}$, we can replace every occurrence of $\circ$
using
\[x\circ y=x+(x*y)+y.\]
Actually, this is the path along which years ago two-sided braces evolved from radical rings.


%
%



\begin{lemma}\label{comm free terms}
Let $(A,\cdot,\circ)$ be a skew brace and let $t(x_1,\ldots, x_n)$ a term in which the variable $x_1$ is not involved in any $*$ operation. Then there exists $k\in \mathbb{Z}$ such that 
$t(x_1+z,\ldots, x_n)=t(x_1,\ldots, x_n)+kz$ for every $z\in \zeta(A)$ and every $x_1,\ldots, x_n\in A$.
\end{lemma}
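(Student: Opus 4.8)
The plan is to argue by structural induction on the term $t$, reading the integer $k$ off directly from the way $x_1$ occurs in $t$. I interpret the hypothesis that ``$x_1$ is not involved in any $*$ operation'' as saying that $x_1$ occurs in no argument of any $*$-subterm of $t$; equivalently, on the path from each occurrence of $x_1$ to the root of the term tree one meets only the operations $+$, $-$ and $\overline{(\,)}$. This property passes to the immediate subterms of $t$, so the induction is well founded, and only the first argument is shifted throughout (the remaining $x_2,\dots,x_n$ are kept fixed).

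Before starting I would collect the facts about a central element used repeatedly. If $z\in\zeta(A)$, then by \eqref{center2} we have $z\in Z(A,+)$, $\lambda_z=\mathrm{id}$ and $\lambda_a(z)=z$ for all $a$; since $mz\in\zeta(A)$ for every $m\in\mathbb Z$, this gives, for all $a\in A$,
\[
a+mz=mz+a,\qquad a\circ(mz)=a+mz=mz+a=(mz)\circ a,\qquad \overline{mz}=-mz.
\]
These identities let a central summand be moved freely across $+$, across $-$, and through the $\circ$-inverse.

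I would then define the exponent $k=k(t)$ by the evident recursion: $k(x_1)=1$, $k(x_i)=0$ for $i\neq1$, $k(0)=0$, $k(-s)=-k(s)$, $k(s_1+s_2)=k(s_1)+k(s_2)$, $k(\overline s)=-k(s)$, and $k(s_1*s_2)=0$; thus $k$ depends on $t$ alone and equals the signed number of occurrences of $x_1$ that reach the root through $+/-/\overline{(\,)}$. The base cases and the two additive cases are immediate from centrality: for $t=s_1+s_2$ one pulls the summand $k(s_1)z$ past $s_2$, and for $t=-s$ one uses $-(s+k(s)z)=-s-k(s)z$. The one genuine computation is the $\circ$-inverse: abbreviating $s=s(x_1)$ and writing $w=k(s)z$, the inductive hypothesis gives $s(x_1+z)=s+w$, whence
\[
\overline{s(x_1+z)}=\overline{s+w}=\overline{s\circ w}=\overline w\circ\overline s=(-w)\circ\overline s=\overline s-w,
\]
so that $k(\overline s)=-k(s)$; all three displayed central identities enter here.

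Finally, the case $t=s_1*s_2$ is exactly where the hypothesis is spent: since $x_1$ is not involved in this $*$, neither $s_1$ nor $s_2$ contains $x_1$, so $t$ is constant in $x_1$ and $k=0$. I expect the $\overline{(\,)}$ step to be the main obstacle, since it is the only place where a central element must be shown to survive the $\circ$-inverse with merely a change of sign; every other case is bookkeeping of signs, which is precisely what the recursion defining $k$ records.
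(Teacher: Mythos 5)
Your proof is correct and takes essentially the same route as the paper's: induction on the structure of the term, with the $+$ and $-$ cases handled by centrality of $kz$ and the $\circ$-inverse case by the same key computation $\overline{s+kz}=\overline{s\circ kz}=\overline{kz}\circ\overline{s}=\overline{s}-kz$. Your explicit recursion defining $k(t)$ and your explicit treatment of the $*$-subterm case (where $k=0$) only make precise what the paper handles implicitly via its ``$x_1$ not present'' reduction.
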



\begin{proof}
We may assume that the variable $x_1$ is actually not present in~$t$. In this case, the statement is vacuously true.

Suppose that $x_1$ is present in~$t$ and we prove the lemma by induction on the height of~$t$. 
 Recall that if $x\in \zeta(A)$ then $z\in Z(A,+)\cap Z(A,\circ)$ and $-z=\overline{z}$. If the height is zero, the statement is true.
Suppose hence that the height is positive. The root operation is not~$*$ hence there are three possibilities:\newline
1) $t=-s$; then 
\[t(x_1+z,\ldots,x_n)=-s(x_1+z,\ldots,x_n)=-(s(x_1,\ldots,x_n)+kz)=
t(x_1,\ldots,x_n)-kz.\]
2) $t=\bar s$; then 
\begin{multline*}
t(x_1+k,\ldots,x_n)=\overline{s(x_1+z,\ldots,x_n)}=\overline{(s(x_1,\ldots,x_n)+kz)}=\overline{(s(x_1,\ldots,x_n)\circ kz)}\\
=\overline{kz}\circ t(x_1,\ldots,x_n)=
t(x_1,\ldots,x_n)-kz.
\end{multline*}
3) $t(x_1,\ldots, x_n)=s(x_{1},\ldots , x_{n})+r(x_{1},\ldots , x_{n})$; then
\begin{multline*}
t(x_1+z,\ldots, x_n )=
 s(x_1+z,\ldots,x_n)+r(x_1+z,\ldots,x_n)=\\
 s(x_1,\ldots,x_n)+nz+r(x_1,\ldots,x_n)+mz.
 =t(x_1,\ldots,x_n)+(n+m)x.
 \end{multline*}
\end{proof}

\begin{theorem}
Let $(A,+,\circ)$ be a skew brace. Then $\zeta(A)$ is the biggest central ideal of $A$.
\end{theorem}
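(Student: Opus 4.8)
The plan is to prove the two inclusions defining ``biggest central ideal'' separately. For one of them, Corollary~\ref{cor:central_ideals} already guarantees that every central ideal of~$A$ is contained in $\zeta(A)$, so it remains only to show that $\zeta(A)$ is itself central, that is, $\comm{A}{\zeta(A)}=0$ in the sense of Definition~\ref{def of abelian}; combining the two facts then yields that $\zeta(A)$ is the largest central ideal. Since $\comm{A}{\zeta(A)}$ is the ideal generated by the elements $f(a,z)$ with $a\in A$, $z\in\zeta(A)$ and $f\in\Pol_2(A)$ absorbing, it suffices to prove that each such generator vanishes, i.e. that $f(a,z)=0$ for every absorbing binary polynomial~$f$.

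The heart of the argument is a companion to Lemma~\ref{comm free terms}: whereas that lemma perturbs a variable that avoids~$*$, here I allow the distinguished variable to occur inside~$*$, at the cost of substituting a central element rather than adding one. Concretely, I would prove by induction on the height of a binary term $t(x,y)$ (allowing constants, so the statement applies to polynomials) that there is an integer $m_t$, depending only on~$t$, with
\[
t(a,z)=t(a,0)+m_t\,z \qquad\text{for all } a\in A \text{ and } z\in\zeta(A).
\]
The base cases $t=x$, $t=y$, $t=c$ give $m_t\in\{0,1\}$ at once. For the inductive step I would exploit that every $z\in\zeta(A)$ satisfies $z\in Z(A,+)$, $\bar z=-z$ and $x\circ z=z\circ x=x+z$; the cases $t=s+r$, $t=-s$ and $t=\bar s$ then follow by sliding the central summand $m_s z$ (and $m_r z$) out and collecting it additively, yielding $m_t=m_s+m_r$ and $m_t=-m_s$ respectively. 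The only delicate case, and the step I expect to be the main obstacle, is the root operation $t=s*r$: both arguments are perturbed centrally, $s(a,z)=s(a,0)+m_s z$ and $r(a,z)=r(a,0)+m_r z$ with $m_s z,\,m_r z\in\zeta(A)\subseteq\Fix{A}\cap\ker{\lambda}$, and identity~\eqref{comm1} shows precisely that such central perturbations are annihilated by~$*$, so that $t(a,z)=s(a,0)*r(a,0)=t(a,0)$ and $m_t=0$. This is exactly why the coefficient statement survives in the presence of~$*$.

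With the claim established the conclusion is immediate. Let $f\in\Pol_2(A)$ be absorbing. The condition $f(a,0)=0$ gives $f(a,z)=m_f z$, while the condition $f(0,z)=0$ together with $f(0,0)=0$ forces $m_f z=f(0,0)+m_f z=f(0,z)=0$ for every $z\in\zeta(A)$. Hence $f(a,z)=m_f z=0$ for all $a\in A$ and $z\in\zeta(A)$, so every generator of $\comm{A}{\zeta(A)}$ is zero and $\comm{A}{\zeta(A)}=0$. Thus $\zeta(A)$ is a central ideal, and by Corollary~\ref{cor:central_ideals} it contains every central ideal, so it is the biggest one.
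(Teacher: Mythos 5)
Your proposal is correct and takes essentially the same approach as the paper: both proofs work in the language $\{+,-,*,\overline{(\,)},0\}$ and rest on the same two ingredients, namely that a central element slides out of the group operations with an integer coefficient (the content of Lemma~\ref{comm free terms}) and that identity \eqref{comm1} annihilates central perturbations inside $*$, so that an absorbing polynomial satisfies $f(a,z)=m_f z=f(0,z)=0$. The only difference is presentational: the paper first strips the central variable out of every $*$ node iteratively and then applies Lemma~\ref{comm free terms}, whereas you fold both steps into a single strengthened structural induction covering the $*$ case directly.
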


\begin{proof}
According to Corollary~\ref{cor:central_ideals}, we only need to prove $\comm{A}{\zeta(A)}=0$, that means, 
$f(x,y)=0$,
for any binary absorbing polynomial~$f$, $x\in A$ and $y\in\zeta(A)$.

Let $f(y,x)=t(x,x_1,\ldots,x_n)$ and let $t(0,y_1,\ldots,y_n)=f(y,0)=0$. Consider one of the lowest node in which the operation $*$ appears and involves the variable $x$. Then you have a subterm which look like this $s(y_1,\ldots,y_m)*r(x,x_1,\ldots,x_n)$, where $s$ and $r$ are terms (or $r(x,x_1,\ldots,x_n)*s(y_1,\ldots,y_m)$). The subterm $r(x,x_1,\ldots,x_n)$ contains just group operations $+$ and inversions involving $x$, then using Lemma \ref{comm free terms} there exists $k\in \mathbb{Z}$ such that $r(x,x_1,\ldots,x_n)=r(0,x_1,\ldots,x_n)+kx$ for every $x\in \zeta(A)$. Therefore, assuming that $x\in \zeta(A)$ we have 
\begin{eqnarray*}
s(y_1,\ldots,y_m)*r(x,x_1,\ldots,x_n) &=&
s(y_1,\ldots,y_m)*(r(0,x_1,\ldots,x_n)+kx)=\\
&\overset{\eqref{comm1}}{=}&s(y_1,\ldots,y_m)*r(0,x_1,\ldots,x_n).
\end{eqnarray*}
Hence, if $x\in \zeta(A)$, we can remove $x$ from such a node and we can apply the same argument to the next node $*$ involving the variable $x$. So we can run through every branch of the term removing $x$ from every $*$ node. 

Thus, we can assume that $x$ is not involved in any $*$ operation. Then, by Lemma~\ref{comm free terms}, we have 
\begin{align*}
t(z,y,\ldots,y_n)&=t(0,y_1,\ldots,y_n)+kz=f(y,0)+kz=0+kz\\
&=t(0,0,\ldots,0)+kz=t(z,0,\ldots,0)=f(0,z)=0.
\end{align*}
Thus we get that $f(y,x)=t(x,y_1,\ldots,y_n)=t(0,y_1,\ldots,y_n)=f(y,0)=0$.
Therefore $\zeta(A)$ is a central ideal. 
\end{proof}

Skew braces form a Maltsev variety and for such structures there exists a standard construction named \emph{the central extension} 
given in Proposition 7.1 of \cite{comm}. 
We shall present the construction in the context of braces.
Recall, that for braces $\zeta(B)=Z(B,\cdot)\cap \mathrm{Soc}(B)$.
\begin{theorem}\label{cen_ext}
Let $B$ be a skew brace. Then $B\cong (B/\zeta(B)\times \zeta(B),+,\circ)$, where
\begin{eqnarray}
(x,a)+(y,b)&=&(x+y,a+b+\theta(x,y))\label{plus}\\
(x,a)\cdot (y,b)&=&(x\cdot y,a+b+\phi(x,y))\label{dot}\\
\phi(x,y+z)-\phi(x,y)-\phi(x,z) &=&\theta((x\circ y)-x,x\circ z)+\theta(x\circ y,-x)-\theta(x,-x)-\theta(y,z)\label{cocycle},
%
\end{eqnarray}
for every $x,y,z\in B/\zeta(B)$, $a,b,c\in \zeta(B)$, where $\theta:(B/\zeta(B),+)^2\to \zeta(B)$ is an abelian group cocycle and $\phi:(B/\zeta(B),\circ )^2\to \zeta(B)$ is a group cocycle.
\end{theorem}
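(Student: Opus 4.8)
The plan is to realize $B$ as a central extension of $Q:=B/\zeta(B)$ by the abelian group $Z:=\zeta(B)$ and to read off the two factor sets explicitly. First I would fix a set-theoretic section $s\colon Q\to B$ of the canonical projection $\pi\colon B\to Q$, normalized so that $s(0)=0$, and define $\beta\colon Q\times Z\to B$ by $\beta(x,a)=s(x)+a$. Since $Z=\zeta(B)$ is precisely the kernel of $\pi$ (it is an ideal, so the quotient is a skew brace), every $b\in B$ can be written uniquely as $s(\pi(b))+\bigl(b-s(\pi(b))\bigr)$ with the second summand in $Z$; hence $\beta$ is a bijection, and it remains only to compute the two operations of $B$ transported along it.

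For the addition I would put $\theta(x,y):=s(x)+s(y)-s(x+y)$, which lies in $Z$ because $\pi$ is a homomorphism of $(\cdot,+)$; as $\zeta(B)\le Z(B,+)$ this value is central, so moving it freely gives $\beta(x,a)+\beta(y,b)=\beta\bigl(x+y,\,a+b+\theta(x,y)\bigr)$, which is \eqref{plus}, and associativity of $+$ forces $\theta$ to be a $2$-cocycle of $(Q,+)$ with values in the abelian group $Z$. For the circle product the key input is the characterization \eqref{center2}: every $z\in Z$ satisfies $u\circ z=u+z$ and $z\circ u=z+u=u+z$ for all $u$, and moreover $Z\le Z(B,\circ)$. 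Setting $\phi(x,y):=\overline{s(x\circ y)}\circ\bigl(s(x)\circ s(y)\bigr)\in Z$, so that $s(x)\circ s(y)=s(x\circ y)+\phi(x,y)$, and pushing all the central factors past the section values, I obtain $\beta(x,a)\circ\beta(y,b)=\beta\bigl(x\circ y,\,a+b+\phi(x,y)\bigr)$, which is \eqref{dot}; associativity of $\circ$ then makes $\phi$ a $2$-cocycle of $(Q,\circ)$. At this point $\beta$ is an isomorphism of skew braces from $\bigl(Q\times Z,\eqref{plus},\eqref{dot}\bigr)$ onto $B$.

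It remains to identify the compatibility relation \eqref{cocycle}. Because $\beta$ is an isomorphism, the transported operations must satisfy the defining identity \eqref{axiom}, so I would substitute \eqref{plus} and \eqref{dot} into $x\circ(y+z)=(x\circ y)-x+(x\circ z)$ and compare the two coordinates of $(x,a)\circ\bigl((y,b)+(z,c)\bigr)$ with those of $\bigl((x,a)\circ(y,b)\bigr)-(x,a)+\bigl((x,a)\circ(z,c)\bigr)$. The first coordinate reduces to the skew brace axiom already holding in $Q$; the second, after cancelling the common terms $a+b+c$ and using that $Z$ is abelian, collapses to exactly \eqref{cocycle}. Here one needs the explicit additive inverse $-(x,a)=\bigl(-x,\,-a-\theta(x,-x)\bigr)$ and the correction term $\theta\bigl((x\circ y)-x,\,x\circ z\bigr)$ produced when the two summands on the right are added.

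The main obstacle is the bookkeeping in the circle-product and axiom computations: one must track precisely where each correction $\theta$ or $\phi$ is inserted, and repeatedly invoke that elements of $Z$ are central for both $+$ and $\circ$ and satisfy $u\circ z=u+z$, in order to justify sliding them past the section values. Everything else is routine, the abstract existence of such a decomposition being guaranteed by the central-extension construction (Proposition 7.1 of \cite{comm}) for the Maltsev variety of skew braces.
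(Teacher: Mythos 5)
Your proposal is correct and follows essentially the same route as the paper's proof: the paper also realizes $(B,+)$ and $(B,\circ)$ as central extensions of the respective quotient groups by $\zeta(B)$ (using that $+$ and $\circ$ coincide on $\zeta(B)$, so one decomposition serves both operations) and obtains \eqref{cocycle} from the skew brace axiom \eqref{axiom}. The only difference is one of detail: the paper compresses this into a three-sentence appeal to standard central-extension theory, whereas you carry out the section/factor-set computation explicitly.
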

\begin{proof}
Clearly $(B,+)$ is a central extension of $(B/\zeta(B),+)$, so the operation $+$ is described as in \eqref{plus} by a group cocycle $\phi$.  Since $\zeta(B)\leq Z(B,\circ)$, then and $(B/\zeta(B),\circ)$ is a central extension of $(B/\zeta(B),\circ)$, so \eqref{dot} holds since the operations $+$ and $\circ$ coincides on $\zeta(B)$. The last condition \eqref{cocycle}, follows directly by the compatibility condition between $+$ and $\circ$. 
\end{proof}


At the end we shall mention that
there exists another 
universal algebraic
definition that generalizes the notion of nilpotent groups.

\begin{definition}
 Let~$A$ be an expanded group.
 We define {\em higher commutators} as follows: let $I_1,\ldots,I_k$ be ideals
 of~$A$ then 
 $\llbracket I_1,\ldots,I_k\rrbracket$
 is the ideal generated by
 \[\{f(x_1,\ldots,x_k)\ : \ x_i\in I_i, f\in\Pol_k(A),\> f\text{ absorbing}\}.\]
 We say that $A$
 is {\em supernilpotent of class $k$}
 if $\llbracket \underbrace{A,A,\ldots,A}_{k+1\text{ times}}\rrbracket=0$.
\end{definition}

Another reformulation of the definition
is that $A$ is $k$-supernilpotent if
and only if every $k+1$-ary absorbing polynomial is constant.
It is known \cite{AE} that for a group, the notions of nilpotency and of supernilpotency coincide. In general, the supernilpotency of expanded groups is a stronger property.

\begin{theorem}\cite[Corollary 6.15]{AM}
 Let $A$ be a skew brace which is supernilpotent of class~$k$. Then $A$ is centrally nilpotent of class at most~$k$.
\end{theorem}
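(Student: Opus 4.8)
The plan is to factor this implication through the ordinary iterated (binary) commutator. Alongside the series $\Gamma_n(A)$ of \eqref{gammas}, I would consider the \emph{commutator lower central series} defined by $\gamma_1=A$ and $\gamma_{n+1}=\comm{A}{\gamma_n}$. Since the commutator of two ideals is by definition again an ideal, and $\comm{I}{J}=\comm{J}{I}$ (if $f(x,y)$ is absorbing then so is $f(y,x)$, so the two generating sets coincide), every $\gamma_n$ is an ideal of $A$. I would first prove $\Gamma_n(A)\leq\gamma_{n+1}$ for all $n$, then prove that $k$-supernilpotency forces $\gamma_{k+1}=0$. Together these give $\Gamma_k(A)\leq\gamma_{k+1}=0$, and by the equivalence $\Gamma_k(A)=0\Leftrightarrow{}$``centrally nilpotent of class at most $k$'' established above, this is exactly the conclusion.

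For the first inequality I would argue by induction on $n$, the case $n=0$ being $\Gamma_0(A)=A=\gamma_1$. The generators $g*a$, $a*g$ and $[g,a]_+$ of $\Gamma_n(A)$, with $g\in\Gamma_{n-1}(A)$ and $a\in A$, are precisely the values $f(g,a)$ of the three absorbing binary polynomials of Proposition~\ref{necessary cond for [,]=1}. By the induction hypothesis $g\in\Gamma_{n-1}(A)\leq\gamma_n$, and $\gamma_n$ is an ideal, so each such $f(g,a)$ belongs to the generating set of $\comm{\gamma_n}{A}=\gamma_{n+1}$; as $\gamma_{n+1}$ is a subgroup of $(A,+)$, the $+$-subgroup it generates, namely $\Gamma_n(A)$, is contained in $\gamma_{n+1}$. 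It is worth stressing that this argument never needs $\Gamma_{n-1}(A)$ to be an ideal (which, as noted in Section~2, is not known): only its containment in the genuine ideal $\gamma_n$ is used.

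The second inequality is where supernilpotency enters, and it is the genuine obstacle. Here I would show $\gamma_n\leq\llbracket A,\ldots,A\rrbracket$ (with $n$ slots) by induction, the inductive step being the higher-commutator estimate $\comm{A}{\llbracket A,\ldots,A\rrbracket}\leq\llbracket A,A,\ldots,A\rrbracket$, where one slot is added on the left. At the level of generators this is transparent: substituting an absorbing $m$-ary polynomial $h$ into the second argument of an absorbing binary polynomial $g$ produces $g(x_1,h(x_2,\ldots,x_{m+1}))$, which vanishes whenever any one of $x_1,\ldots,x_{m+1}$ equals $0$, hence is an absorbing $(m+1)$-ary polynomial and a generator of the $(m+1)$-fold higher commutator. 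Iterating from $\gamma_1=A=\llbracket A\rrbracket$ then yields $\gamma_{k+1}\leq\llbracket A,\ldots,A\rrbracket$ with $k+1$ slots, which is $0$ precisely because $A$ is supernilpotent of class $k$.

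The delicate point — and the reason the statement is attributed to \cite{AM} rather than reproved from scratch — is the promotion of the generator-level containment of the previous paragraph to an inclusion of the full ideals they generate. This requires the higher-commutator calculus of \cite{AM,comm}: monotonicity, the term-condition description of $\comm{\cdot}{\cdot}$, and the behaviour of higher commutators under composition, all available because skew braces form a Maltsev variety. Granting these, the estimate $\comm{\alpha}{\llbracket\beta_1,\ldots,\beta_m\rrbracket}\leq\llbracket\alpha,\beta_1,\ldots,\beta_m\rrbracket$ becomes a theorem, \cite[Corollary~6.15]{AM} applies to give $\gamma_{k+1}=0$, and the reduction $\Gamma_k(A)\leq\gamma_{k+1}$ carried out above converts this into central nilpotency of class at most $k$.
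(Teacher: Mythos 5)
Your proposal is correct, but note first that the paper itself contains no proof of this statement: it is imported wholesale from \cite{AM}, with the dictionary between the congruence-theoretic notions of \cite{comm,AM} and the skew-brace notions of Section~2 left implicit. What you have written out is essentially that dictionary, built through the lower central series rather than through the center. Your reduction $\Gamma_n(A)\leq\gamma_{n+1}$ is sound: the generators of $\Gamma_n(A)$ are values of the absorbing binary polynomials $x*y$, $y*x$ and $x+y-x-y$ at arguments $g\in\Gamma_{n-1}(A)\leq\gamma_n$ and $a\in A$, hence lie in the generating set of $\comm{\gamma_n}{A}=\comm{A}{\gamma_n}$ (symmetry is indeed immediate from the absorbing-polynomial definition of the commutator), and since $\gamma_{n+1}$ is an ideal, hence a $+$-subgroup, it contains the subgroup $\Gamma_n(A)$ that these generators generate; your observation that this step never needs $\Gamma_n(A)$ to be an ideal is exactly the right point, given that the paper leaves that question open. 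You also locate the genuine core correctly: the jump from composition of absorbing polynomials to the ideal-level nesting inequality $\comm{A}{\llbracket A,\ldots,A\rrbracket}\leq\llbracket A,A,\ldots,A\rrbracket$ is not a formality (general elements of a higher commutator are not themselves values of absorbing polynomials), and this is precisely what the Maltsev-variety machinery of \cite{AM} supplies; leaning on it is not circular, since what you borrow is the congruence-level result while the skew-brace-specific translation is what you actually prove. By contrast, the bridge the paper intends is its Section~3 theorem that $\zeta(A)$ is the universal-algebraic center, which identifies central nilpotency with nilpotency in the sense of \cite{comm} directly (centrality of $I_{j+1}/I_j$ in $A/I_j$ becomes containment in $\zeta(A/I_j)$), so that \cite[Corollary 6.15]{AM} applies verbatim. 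Both bridges are valid; yours is more explicit about what must be checked, at the price of redoing, via the $\Gamma_n$/$\gamma_n$ comparison, work that the center theorem already encodes.
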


\begin{problem}
A converse implication does not hold for expanded groups in general. However, some converse may be true for skew braces.
\end{problem}

\bibliographystyle{plain}
\bibliography{references} 

\def\cprime{$'$} \def\cprime{$'$}
\begin{thebibliography}{10}

\bibitem{AE}
Erhard Aichinger and J\"urgen Ecker.
\newblock Every $(k + 1)$-affine complete nilpotent group of class $k$ is
  affine complete.
\newblock {\em Internat. J. Algebra Comput.}, 16(2):259--274, 2006.

\bibitem{AM}
Erhard Aichinger and Neboj\v sa~Mudrinski.
\newblock Some applications of higher commutators in {M}al'cev algebras.
\newblock {\em Algebra Universalis}, 63(4):367--403, 2010.

\bibitem{MR3835326}
David Bachiller.
\newblock Solutions of the {Y}ang-{B}axter equation associated to skew left
  braces, with applications to racks.
\newblock {\em J. Knot Theory Ramifications}, 27(8):1850055, 36, 2018.

\bibitem{Heisenberg}
Marco Bonatto and Dikran Dikranjan.
\newblock Generalized {H}eisenberg groups and self duality.
\newblock {\em Questions and answers in general topolody}, 37(2):89--108, 2019.

\bibitem{NilpotentType}
Ferran Cedó, Agata Smoktunowicz, and Leonardo Vendramin.
\newblock Skew left braces of nilpotent type.
\newblock {\em Proceedings of the London Mathematical Society},
  118(6):1367--1392, 6 2019.

\bibitem{BCJO}
Eric~Jespers David~Bachiller, Ferran~Ced\'o and Jan Okni\'nski.
\newblock Asymmetric product of left braces and simplicity; new solutions of
  the {Y}ang–{B}axter equation.
\newblock {\em Commun. in Contemp. Math.}, 21(8), 2019.

\bibitem{drinfeld}
Vladimir Drinfel{\cprime}d.
\newblock On some unsolved problems in quantum group theory.
\newblock In {\em Quantum groups ({L}eningrad, 1990)}, volume 1510 of {\em
  Lecture Notes in Math.}, pages 1--8. Springer, Berlin, 1992.

\bibitem{EGS}
Pavel Etingof, Alexander Soloviev, and Robert Guralnick.
\newblock Indecomposable set-theoretical solutions to the quantum
  {Y}ang-{B}axter equation on a set with a prime number of elements.
\newblock {\em J. Algebra}, 242(2):709--719, 2001.

\bibitem{comm}
Ralph Freese and Ralph McKenzie.
\newblock {\em Commutator theory for congruence modular varieties}, volume 125
  of {\em London Mathematical Society Lecture Note Series}.
\newblock Cambridge University Press, Cambridge, 1987.

\bibitem{MR1637256}
Tatiana Gateva-Ivanova and Michel Van~den Bergh.
\newblock Semigroups of {$I$}-type.
\newblock {\em J. Algebra}, 206(1):97--112, 1998.

\bibitem{Guarneri}
Leandro Guarnieri and Leandro Vendramin.
\newblock Skew braces and the {Y}ang--{B}axter equation.
\newblock {\em Math. Comp.}, 86(307):2519--2534, 2017.

\bibitem{MR1769723}
Jiang-Hua Lu, Min Yan, and Yong-Chang Zhu.
\newblock On the set-theoretical {Y}ang-{B}axter equation.
\newblock {\em Duke Math. J.}, 104(1):1--18, 2000.

\bibitem{Rump1}
Wolfgang Rump.
\newblock Braces, radical rings, and the quantum {Y}ang-{B}axter equation.
\newblock {\em J. Algebra}, 307(1):153--170, 2007.

\bibitem{Engel}
Agata Smoktunowicz.
\newblock On engel groups, nilpotent groups, rings, braces and the
  {Y}ang-{B}axter equation.
\newblock {\em Transactions of the American Mathematical Society}, 370:1, 05
  2017.

\bibitem{SmoktVend}
Agata Smoktunowicz and Leandro Vendramin.
\newblock On skew braces (with an appendix by {N}. {B}yott and {L}.
  {V}endramin).
\newblock {\em J. Comb. Alg.}, 2(1):47--86, 2018.

\bibitem{MR1809284}
Alexander Soloviev.
\newblock Non-unitary set-theoretical solutions to the quantum {Y}ang-{B}axter
  equation.
\newblock {\em Math. Res. Lett.}, 7(5-6):577--596, 2000.

\bibitem{GAP}
Leandro Vendramin and Alexander Konovalov.
\newblock {\em YangBaxter package}.
\newblock The GAP~Group, 2019.

\end{thebibliography}

\end{document}